\newtheorem{thm}{Theorem}[section]
\newtheorem{lem}[thm]{Lemma}
\newtheorem{prop}[thm]{Proposition}
\theoremstyle{definition}
\theoremstyle{remark}
\theoremstyle{remark}
\numberwithin{equation}{section}
\renewcommand{\Re}[1]{\mathrm{Re}\!\left\{ #1\right\}}
\renewcommand{\Im}[1]{\mathrm{Im}\!\left\{ #1\right\}}
\begin{document}
\author[M.~Rodr\'iguez]{Miguel Rodr\'iguez}
\address{}
\email{rodmiga@yahoo.com}
\title{Asymptotic distribution of the roots of the Ehrhart polynomial of the cross-polytope}
\begin{abstract}
We use the method of steepest descents to study the root distribution of the Ehrhart polynomial of the $d$-dimensional cross-polytope, namely $\mathcal{L}_{d}$,  as $d\rightarrow \infty$.  We prove that the distribution function of the roots, approximately, as $d$ grows,  by variation of argument of the generating function $\sum_{m\geq 0}\mathcal{L}_{d}(m)t^{m+x-1}=(1+t)^{d}(1-t)^{-d-1}t^{x-1}$, as $t$ varies  appropriately on the segment of the imaginary line contained inside the unit disk.  
\end{abstract}
\maketitle
\section{Introduction}
There has been of recent some interest in the root distribution of Ehrhart polynomials.  In \cite{beck-2005-374}, Beck et al.\ come up with bounds for the coefficients and roots of Ehrhart polynomials.  They found them to be contained inside a disk of radius $d!$ centered at $-1/2$, where $d$ is the degree of the Ehrhart polynomial in question.  This result was subsequently improved upon by Braun in \cite{braun-2008-39} and \cite{braun-2008}, and Bey et al.\ in \cite{bey-2006}, where the radius of the above mentioned disk is reduced to the order of $d^{2}$.  All of these last results depend Stanley's non-negativity theorem, which states that the coefficients of the Ehrhart polynomial are, when the polynomial is expressed in a binomial basis, positive.  In \cite{BuChKuVa00}, Vaaler et al.\ prove that the roots of the Ehrhart polynomial of the cross-polytope all have real part $-1/2$.  In \cite{MR1896405}, Rodr\'iguez-Villegas generalizes the last result by proving that if the generating function $\sum_{m\geq 0}\mathcal{P}(m)t^{m}$, where $\mathcal{P}$ is some polynomial, has all of its roots on the unit circle, then, $\mathcal{P}$ has all of its roots on a vertical line in the complex plane.  These last results depend, not on the positivity of the coefficients, but on functional equations that are satisfied by the above generating function.  Here, we look at the zero distribution of the Erhart polynomial of the $d$-th dimensional cross-polytope, namely $\mathcal{L}_{d}$, as $d$ grows.  The form of the generating function allows us the use of the method of steepest descents to study the asymptotics of $\mathcal{L}_{d}$ on the line $\{ z\in \mathbb{C}:\Re{z}=-1/2\}$.  Since our aim is to obtain an asymptotic formula which approximates $\mathcal{L}_{d}$ uniformly on ever large segments of a vertical line, we will have to deal with eventualities that arise in the use of the method of steepest descent for uniform approximations; we will have to deal with coalescing saddle points and with a saddle point coalescing with a singularity of the integrand.  The terminology will all be explained below.   

The \emph{$d$-dimensional cross-polytpe} is the convex hull in $\mathbb{R}^{d}$ of $\{ \pm e_{j}| j=1,\ldots ,d\}$ where $e_{j}=(\delta_{j1},\ldots ,\delta_{jd} )$ where $\delta_{ji}$ is the kronecker delta.  Let $\mathcal{L}_{d}$ be the function $\mathcal{L}_{d}(m)=\# (\mathbb{Z}^{d}\cap m\mathcal{P}_{d})$ for $m\in \mathbb{Z}^{\geq 0}$.  $\mathcal{L}_{d}$ is actually a polynomial, called the \emph{Ehrhart polynomial of the $d$-dimensional cross-polytope}.  For introductory material on the cross-polytope and Ehrhart polynomials see \cite{Mathias-Beck:2007ek}.  $\mathcal{L}_{d}$ can be expressed as a sum of binomials:
\begin{equation}\label{ehrhart}
\mathcal{L}_{d}(x)=\sum_{j=0}^{d}\binom{d}{j}\binom{d-j+x}{d}
\end{equation}
The sequence $\{ \mathcal{L}_{d}(m)\}_{m\geq 0}$ has generating function:
\begin{equation}
\mathcal{E}_{d}(t)=\sum_{m\geq 0}\mathcal{L}_{d}(m)t^{m}=\frac{(1+t)^{d}}{(1-t)^{d+1}}
\end{equation}
We can also write:
\begin{equation}\label{integral}
\mathcal{L}_{d}(x)=\frac{1}{2\pi i}\int_{\Gamma}\mathcal{E}_{d}(t)t^{-x-1}dt,
\end{equation}
where $\Gamma \subset \mathbb{C}$ is some simple closed curve, oriented counterclockwise with respect to the bounded connected component of its complement.  $\Gamma$ contains 1 in the bounded connected component of its complement, while 0 is in the unbounded connected component.  We can derive the above formula as follows.
\[
\begin{split}
\int_{\Gamma}\mathcal{E}_{d}(t)t^{-x-1}dt&=\int_{\Gamma}\frac{(1+t)^{d}}{(1-t)^{d+1}}t^{-x-1}dt\\
&=\sum_{j=0}^{d}\binom{d}{j}\int_{\Gamma}\frac{t^{j-x-1}}{(1-t)^{d+1}}dt\\
&=\sum_{j=0}^{d}\binom{d}{j}2\pi i\mbox{Res} \left[ \frac{t^{j-x-1}}{(1-t)^{d+1}};1\right]\\
&=2\pi i\sum_{j=0}^{d}\binom{d}{j}(-1)^{d+1}\binom{j-x-1}{d}\\
&=2\pi i\sum_{j=0}^{d}\binom{d}{j}\binom{d-j+x}{d}\\
&=2\pi i\mathcal{L}_{d}(x)
\end{split}
\]
The above integral representation allows us the use of the method of steepest descents to produce an asymptotic formula for $\mathcal{L}_{d}$.  An explanation of the method of steepest descents and our use of it will be given in the sections below.  In these notes we use Landau's asymptotic notation.  If $f$ and $g$ are complex valued functions with domain $D \subset \mathbb{C}$, we write $f(z)=O(g(z))$, or $f(z)\ll g(z)$, as $z\rightarrow a$, if there is some constant $C>0$ such that $|f(z)|\leq C|g(z)|$ for all $z$ sufficiently close to $a$ in $D$ (when $a$ is $\infty$, by all $z$ sufficiently close to $a$ we mean for all sufficiently large $|z|$).  The previous situation can also be described by the symbols $g(z)=\Omega (f(z))$ or $g(z)\gg f(z)$ as $z\rightarrow a$.  Sometimes we will omit ``$z\rightarrow a$'' if we believe that its presence is obvious from the context.  By $f(z)=o(g(z))$ as $z\rightarrow a$ we mean $\frac{f(z)}{g(z)}\rightarrow 0$ as $z\rightarrow a$.   By $f(z)=\omega (g(z))$ as $z\rightarrow a$ we mean $|\frac{f(z)}{g(z)}|\rightarrow +\infty$ as $z\rightarrow a$.  By $f(z)\sim g(z)$ as $z\rightarrow \infty$ we understand $\left| \frac{f(z)}{g(z)}\right| \rightarrow 1$ as $z\rightarrow a$.  We will use $f(z)=\Theta [g(z)]$ to denote that $g(z)\ll f(z)\ll g(z)$ as $z\rightarrow a$.  We will use $\mathbb{D}$ to denote the closed unit disk.  Our asymptotic formula for $\mathcal{L}_{d}$ is given in the following theorem.
\begin{thm} \label{main theorem}
Let $\epsilon \in (0,1/2)$.  Let $S_{d}$ be the sequence of sets $S_{d}=\{ z\in \mathbb{C}:\Re{z}\in [\epsilon ,1-\epsilon ],|z|\leq d-\sqrt[6]{d}\}$.  Given $x\in S_{d}$, define $\alpha =\alpha (x)=-i\left( \frac{d}{\Im{x}}-\sqrt{\frac{d^{2}}{\Im{x}^{2}}-1}\right)$.  Let $C_{m}/m$ be the $m$-th degree coefficient of the Taylor series of $\log \{ \mathcal{E}_{d}(t)(1-t)t^{i\Im{x}}\}$ around $\alpha$.  Then, there is a complex valued function $\mathcal{F}(d,x)$ defined for $x\in S_{d}$, such that, for large enough $d$:
\begin{equation}\label{asymptotic formula}
\mathcal{L}_{d}(-x)=\mathcal{F}(d,x)+(-1)^{d+1}\overline{\mathcal{F}(d,1-\overline{x})},
\end{equation}
and,
\begin{equation}
\mathcal{F}(d,x)=\begin{cases}
\frac{\sin \pi x}{\pi}[(2d+1)^{-x}\Gamma(x)+O[(2d+1)^{-x-1}]&\text{; if $x=O( \sqrt{\log d})$}\\
\frac{3C_{2}}{2C_{3}}\mathcal{E}_{d}(\alpha )\alpha^{x}\mathcal{I}_{d}(x)\{ 1+O(\Im{x}^{-1/28})\}&\text{; if $\sqrt{\log d}\ll |x|\ll d-\sqrt[6]{d}$}
\end{cases}
\end{equation}
Where $D$ is some positive constant, and,
\[
\mathcal{I}_{d}(x)=\int_{-1/3}^{\infty}\exp \left\{ i\frac{9}{4}\frac{C_{2}^{3}}{C_{3}^{2}}F(T)\right\} U'(T)dT
\]
,where $F(T)=2T^{2}\frac{(2T+1)^{2}\sqrt{T+1}}{(3T+1)^{\frac{3}{2}}}$, $U(T)=T( \sqrt{\frac{T+1}{3T+1}}+i)$.
\end{thm}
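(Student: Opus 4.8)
The plan is to evaluate the contour integral \eqref{integral} for $\mathcal{L}_d(-x)$ by the method of steepest descents, tracking the saddle points of the phase function uniformly in $x$ as $x$ ranges over $S_d$. First I would write $\mathcal{E}_d(t)t^{x-1}=(1+t)^d(1-t)^{-d-1}t^{x-1}$ and, since $\mathcal{L}_d(-x)$ is real when $x$ is real and satisfies the Ehrhart reciprocity $\mathcal{L}_d(-x)=(-1)^d\mathcal{L}_d(x-1)$ (equivalently the functional equation $\mathcal{E}_d(1/t)=(-1)^{d+1}t\,\mathcal{E}_d(t)$), I would split the integrand's ``phase'' into a part depending on $\Re x$ and a part depending on $\Im x$; this is what produces the decomposition \eqref{asymptotic formula} into $\mathcal{F}(d,x)$ and the conjugate-reciprocal term $(-1)^{d+1}\overline{\mathcal{F}(d,1-\overline{x})}$, so that it suffices to analyze $\mathcal{F}(d,x)$ alone. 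For that piece I would locate the saddle points of $g(t):=d\log(1+t)-(d+1)\log(1-t)+(x-1)\log t$ by solving $g'(t)=0$; writing $x=x_1+i\,\Im x$ and keeping the dominant balance between the $d$-dependent terms and $i\,\Im x\cdot t^{-1}$, the relevant saddle is the one on (or near) the imaginary axis inside $\mathbb{D}$, namely $\alpha=\alpha(x)$ as defined in the statement — note $\alpha$ is exactly the root of $\Im x\,(\alpha^2+1)+2d\,\alpha=0$ lying in $\mathbb{D}$ on the negative imaginary axis.

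Next I would treat the two regimes separately. When $|x|=O(\sqrt{\log d})$ the saddle $\alpha\approx -i\,\Im x/(2d)$ is very close to the singularity $t=0$ of $t^{x-1}$, so the plain saddle-point method fails: instead I would deform $\Gamma$ to a small circle $|t|=\rho$ around $0$ (picking up no contribution there since $t^{x-1}$ has an integrable/analytic behavior once we fix a branch — more precisely, expand $(1+t)^d(1-t)^{-d-1}$ near $0$) plus a Hankel-type loop, and extract the leading term via the classical integral $\frac{1}{2\pi i}\int t^{x-1}e^{-(2d+1)t}(1+O(t))\,dt$, recognizing $\frac{1}{\Gamma(1-x)}=\frac{\sin\pi x}{\pi}\Gamma(x)$ by the reflection formula; the $(2d+1)^{-x}$ scaling comes from $\log\mathcal{E}_d(t)=(2d+1)t+O(t^3)$ near $0$, and the error term $O((2d+1)^{-x-1})$ from the next Taylor coefficient. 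When $\sqrt{\log d}\ll|x|\ll d-\sqrt[6]{d}$, the saddle $\alpha$ is bounded away from $0$ but is of order $|\alpha|\sim \Im x/d$ which, near the upper end $|x|\approx d$, forces $\alpha$ toward $-i$, i.e.\ toward the boundary of $\mathbb{D}$ and toward coalescence with the singularity at $t=1$ (after the functional equation this is the ``saddle meets singularity'' phenomenon). In the bulk of this regime, however, I expect a single nondegenerate saddle: I would substitute the change of variables $t=\alpha\,U(T)$ with $U(T)=T(\sqrt{(T+1)/(3T+1)}+i)$ chosen so that along the steepest-descent contour the phase becomes exactly $\tfrac{9}{4}(C_2^3/C_3^2)F(T)$ with $F(T)=2T^2(2T+1)^2\sqrt{T+1}\,(3T+1)^{-3/2}$, which is the canonical cubic-type normal form one gets when the second- and third-order Taylor coefficients $C_2/2$ and $C_3/3$ of $\log\{\mathcal{E}_d(t)(1-t)t^{i\,\Im x}\}$ at $\alpha$ are the controlling data; this yields the stated $\mathcal{I}_d(x)$ and the prefactor $\frac{3C_2}{2C_3}\mathcal{E}_d(\alpha)\alpha^x$.

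The remaining work is bounding the error $O(\Im x^{-1/28})$ uniformly. Here I would estimate: (i) the tail of the integral beyond the region where the cubic normal form is valid, using $|t|<1$ to get exponential decay of $(1-t)^{-d-1}$-type factors away from $t=1$; (ii) the contribution of higher Taylor coefficients $C_m$, $m\ge 4$, at $\alpha$ — I would need quantitative bounds on $C_m$ in terms of $C_2,C_3$ and the distance from $\alpha$ to $\{1,-1\}$, which is where the constraint $|x|\le d-\sqrt[6]{d}$ enters (it keeps $\alpha$ at distance $\gg d^{-1/6}$-ish from $-i$, hence the radius of convergence of the log-series at $\alpha$ stays under control); (iii) the fact that $C_3$ does not vanish, so the normal form is genuinely cubic and not higher — this requires checking $C_3=C_3(d,x)\ne 0$ throughout $S_d$ and estimating $C_2^3/C_3^2$ from below so that the oscillatory integral $\mathcal{I}_d(x)$ is itself $\Theta$ of something explicit.

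\medskip

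\noindent\textbf{Main obstacle.} I expect the hardest part to be item (ii)–(iii): getting \emph{uniform} control of the Taylor tail of $\log\{\mathcal{E}_d(t)(1-t)t^{i\,\Im x}\}$ at the moving saddle $\alpha(x)$ all the way up to $|x|=d-\sqrt[6]{d}$, where $\alpha$ approaches the unit circle and the saddle is simultaneously drifting toward the singularity at $t=1$ of $\mathcal{E}_d$. In that near-boundary range one is effectively in a ``saddle coalescing with a pole'' situation, and the clean cubic normal form $F(T)$ must be shown to remain the right local model — equivalently, that the correction from the pole is absorbed into the error $O(\Im x^{-1/28})$. Pinning down the exact exponent $1/28$ will come from balancing these competing error sources against the width $d^{1/6}$ of the forbidden zone near $|z|=d$, so the bookkeeping of which power of $\Im x$ (or of $d-|x|$) dominates is the delicate step; everything else is a fairly standard, if lengthy, steepest-descent computation.
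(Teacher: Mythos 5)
Your overall strategy coincides with the paper's: the same contour representation \eqref{integral}, the same saddle $\alpha=\alpha_{-}$ inside $\mathbb{D}$, the decomposition \eqref{asymptotic formula} obtained by splitting the contour at the unit circle and combining the functional equation $\mathcal{E}_{d}(t^{-1})=(-1)^{d+1}t\,\mathcal{E}_{d}(t)$ with complex conjugation, a Watson's-lemma computation for $|x|=O(\sqrt{\log d})$ (the paper reaches your Hankel/reflection-formula calculation through the Mellin--Beta identity $\frac{\pi}{\sin \pi x}\mathcal{L}_{d}(-x)=\int_{0}^{\infty}\mathcal{E}_{d}(-t)t^{x-1}dt$, which is the same computation in different clothing), and the cubic local model $\frac{C_{2}}{2}(t-\alpha)^{2}+\frac{C_{3}}{3}(t-\alpha)^{3}$ with the path $\Delta_{\alpha}(T)=\alpha+\frac{3}{2}|\alpha|\frac{K_{2}}{K_{3}}U(T)$ in the large-$|x|$ regime. (Two local slips: the substitution is $t=\alpha+\mathrm{const}\cdot U(T)$, not $t=\alpha\,U(T)$, and the saddle equation should read $2d\alpha+i\Im{x}(1-\alpha^{2})=0$.)

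The one point you genuinely misdiagnose is the nature of the degeneration as $|x|\rightarrow d-\sqrt[6]{d}$. The saddle does not drift into the pole at $t=1$: as $\tau=\Im{x}\rightarrow d$ the two saddles $\alpha_{\pm}=-i\bigl( d/\tau \pm \sqrt{d^{2}/\tau^{2}-1}\bigr)$ coalesce \emph{with each other} at $-i$ (note $\alpha_{+}\alpha_{-}=-1$), which is a regular point of the integrand; the factor $(1-t)^{-1}$ stays bounded there, so there is no ``saddle meets pole'' correction to absorb. What actually degenerates is $C_{2}$: by \eqref{k2}, $K_{2}=\sqrt{1-\tau^{2}/d^{2}}\rightarrow 0$, so the quadratic term of the phase collapses and a single-nondegenerate-saddle (quadratic) analysis fails once $d-\tau=O(\sqrt[3]{d})$. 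This is the Chester--Friedman--Ursell coalescing-saddles situation, and it is precisely why the contour must follow the steepest-descent curve of the \emph{cubic} truncation, with the lower bound $K_{3}\geq 1-1/\sqrt{2}$ of \eqref{c3} guaranteeing that the cubic really is the controlling term. Your prescribed local model happens to be the correct one, so the plan survives, but your ``main obstacle'' paragraph aims the uniformity analysis at the wrong singularity: the genuine saddle-meets-singularity phenomenon occurs at the opposite end, where $\alpha\approx -i\tau/(2d)\rightarrow 0$ collides with the branch point of $t^{x-1}$, which is exactly why the regime $|x|=O(\sqrt{\log d})$ is split off and handled by Watson's lemma, as you do.
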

\begin{proof}
The roles that $F(T)$, $U(T)$ play in our use of steepest descent, and, in our study of the roots of $\mathcal{L}_{d}$ will become clear in the proofs, which will all be given below. 
\end{proof}
The above result might be stated briefly as: ``The function $\mathcal{F}_{d}$ is approximately given by the generating series $\mathcal{E}_{d}(t)t^{x}$ evaluated at the saddle point $\alpha (x)$''.  We will be able to determine the root distribution of $\mathcal{L}_{d}$ on the line $\{ z\in \mathbb{C}: \Re{z}=-1/2\}$ just by measuring the variation of the argument $\mathcal{E}_{d}(t)t^{x}$, for $t$ varying in the intersection of the imaginary axis and $\mathbb{D}$.  Theorem \ref{main theorem} only refers to the asymptotic nature of $\mathcal{L}_{d}$ in a restricted range which, as we will see below, contains all the roots. 

\section{Asymptotic distribution of roots of $\mathcal{L}_{d}$}

\subsection{Roots with imaginary part in $[-\sqrt{\log d},\sqrt{\log d}]$}

We first study the roots of $\mathcal{L}_{d}$ , which have imaginary part in $[-\sqrt{\log d},\sqrt{\log d}]$.  For large enough $d$, and for $x=1/2+i\tau$ with $\tau \in [0,\sqrt{\log{d}}]$, we can write, using (\ref{near zero formula}):
\[
\frac{\pi}{\sin \pi x}\mathcal{L}_{d}(-x)=\begin{cases}
2\Re{(2d+1)^{-x}\Gamma (x)g_{d}(x)} &\text{, if $d$ is even}\\
2i\Im{(2d+1)^{-x}\Gamma (x)g_{d}(x)} &\text{, if $d$ is odd}
\end{cases}
\]
where $g_{d}(x)=1+O\left( \frac{x}{\log{d}}\right)$.  Notice that $g_{d}$ is analytic in $S_{d}=\{ x:\Re{x}\in [\epsilon ,1-\epsilon ], \Im{x}\in [-\sqrt{\log d},0]\}$.  We already know the roots of $\mathcal{L}_{d}$ have real part $-1/2$.  Hence, to find roots of $\mathcal{L}_{d}(-x)$, we solve:
\begin{equation}  \label{root eq}
\text{arg} \left\{ (2d+1)^{-x}\Gamma (x)g_{d}(x)\right\} =\begin{cases}
\frac{\pi}{2}\mod{\pi} &\text{, if $d$ is even}\\
0\mod{\pi} &\text{, if $d$ is odd}
\end{cases}
\end{equation}
Then, using Stirling's formula, as $d\rightarrow \infty$,  
\[
\begin{split}
\text{arg} \left\{ (2d+1)^{-x}\Gamma (x)g_{d}(x)\right\} &=\\
&=\text{arg} \left\{ (2d+1)^{-x}\Gamma (x)\right\} +\text{arg}[g_{d}(x)]\\
&=\tau \log \frac{|x|}{e(2d+1)}+O(1)\\
\end{split}
\]
Here we take the principal branch of the argument function; that for which $\text{arg}(1)=0$.

Now, if $f=u+iv$ is some analytic function with real and imaginary parts $u,v$, we can write the partial derivatives of the component functions $u$ and $v$ in terms of the derivative of $f$.  In fact, if $f$ is a function of the complex variable $x=\sigma +i\tau$, with $\sigma$ and $\tau$ in $\mathbb{R}$, then, using the Cauchy-Riemann equations for example,
\[
\frac{d}{d\tau}v(x)=\Re{f'(x)} 
\]
This means that we can use Cauchy's theorem to bound the partial derivatives of $u$ and $v$ in terms of $f$.  Then, since $g_{d}$ is analytic and different from $0$ inside $S_{d}$, we can use Cauchy's estimate (see \cite{ahlfors1979complex}) to obtain a uniform bound for the derivate of the $g_{d}$ for all $d$.  So, for large enough $d$,
\[
\begin{split}
\frac{d}{d\tau} \text{arg} \left\{ (2d+1)^{-x}\Gamma (x)g_{d}(x)\right\} &= \frac{d}{d\tau} (2d+1)^{-x}\Gamma (x)+ \frac{d}{d\tau}g_{d(x)} \\
&= \frac{d}{d\tau}\left[ \tau \log \frac{|x|}{e(2d+1)}\right] +O(1)\\
&=\log (2d+1) +O(1)<0,
\end{split}
\]
In the estimates above we used that $|x|\leq \log d$.  We also took the argument to be the principal branch with $\text{arg}=0$ on the positive real line.  It follows that for large enough $d$, $\text{arg} \left\{ (2d+1)^{-x}\Gamma (x)g_{d}(x)\right\} $ is a monotone function of $\tau$ for $x\in S$ (this method is used by Lagarias to count the zeros of differenced $L$ functions in \cite{lagarias-2005-120}).  Let $a$ and $b$ be such that $-\log{d}\leq a<b\leq \log{d}$.  Then, by (\ref{root eq}) and the work done above, as $d\rightarrow \infty$, 
\begin{multline} \label{small root counter}
\# \{ x\in S_{d}:\text{$x$ is root of $\mathcal{L}_{d}$ with $\Im{x} \in [a,b]$}\} =\\
=\frac{1}{\pi}\text{arg} \left\{ (2d+1)^{-\frac{1}{2}-ia}\Gamma \left( \frac{1}{2}+ia\right) \right\} -\frac{1}{\pi}\text{arg} \left\{ (2d+1)^{-\frac{1}{2}-ib}\Gamma \left( \frac{1}{2}+ib\right) \right\}  +O_{S}(1)
\end{multline}
Where the error above is actually less than 1.Then, as $d\rightarrow +\infty$:
\[
\# \{ x\in S_{d}:\text{$x$ is root of $\mathcal{L}_{d}$ with $\Im{x} \in [a,b]$}\} \sim \frac{\log d}{\pi}(b-a)\quad ,\mbox{as $d\rightarrow \infty$}
\]

\subsection{Roots with absolute value in $[\sqrt{\log d}, d-\sqrt[6]{d}]$}

We now study the roots of $\mathcal{L}_{d}(-x)$ , which have imaginary part in $[\sqrt{\log d},d-\sqrt[6]{d}]$.  For large enough $d$, and for $x=\frac{1}{2}+i\tau$ with $\tau \in [\sqrt{\log d},d-\sqrt[6]{d}]$, we can write, using (\ref{asymptotic formula}):
\[
\mathcal{L}_{d}(-x)=\begin{cases}
2i\Im{\frac{3C_{2}}{2C_{3}}\mathcal{E}_{d}(\alpha )\alpha^{x}\mathcal{I}_{d}(x)g_{d}(x)}&\text{, if $d$ is even}\\
2\Re{\frac{3C_{2}}{2C_{3}}\mathcal{E}_{d}(\alpha )\alpha^{x}\mathcal{I}_{d}(x)g_{d}(x)} &\text{, if $d$ is odd}
\end{cases}
\]
where $g_{d}(x)=1+O(\tau^{-1/28})$.  As in the previous section $g_{d}$ is analytic in $S_{d}=\{ x:\Re{x}\in [\epsilon ,1-\epsilon ], \Im{x}\in [-d+\sqrt[6]{d},0]\}$.  Also, as in the previous section, to find roots of $\mathcal{L}_{d}(-x)$, we solve:
\begin{equation}  \label{root equation ii}
\text{arg} \left\{ \frac{3C_{2}}{2C_{3}}\mathcal{E}_{d}(\alpha )\alpha^{x}\mathcal{I}_{d}(x)g_{d}(x)\right\} =\begin{cases}
\frac{\pi}{2}\mod{\pi} &\text{, if $d$ is even}\\
0\mod{\pi} &\text{, if $d$ is odd}
\end{cases}
\end{equation}
Now, $U'(T)$ is in the first quadrant.  In fact, looking at (\ref{derivative U}), it is not hard to see that $\Im{U(T)}=i$, while $\Re{U(T)}\geq 1/\sqrt{3}$.  Then (see (\ref{km}) for the definition of $K_{m}$), 
\begin{equation} \label{I argument bound}
\text{arg}[\mathcal{I}_{d}(x)]= \arctan \frac{\int_{-1/3}^{\infty}\exp \left\{ -\frac{9}{4}\tau \frac{K_{2}^{3}}{K_{3}^{2}}F(T)\right\} dT}{\int_{-1/3}^{\infty}\exp \left\{ -\frac{9}{4}\tau \frac{K_{2}^{3}}{K_{3}^{2}}F(T)\right\} \Re{U'(T)}dT}\in \left( 0, \frac{\pi}{3}\right)
\end{equation}
Then, as $d\rightarrow \infty$,  
\[
\begin{split}
&\text{arg} \left\{ \frac{3C_{2}}{2C_{3}}\mathcal{E}_{d}(\alpha )\alpha^{x}\mathcal{I}_{d}(x)g_{d}(x)\right\} =\\
&=\text{arg}\frac{3C_{2}}{2C_{3}}+(2d+1)\text{ arg}(1+\alpha )+\text{arg}(\alpha^{x})+\text{arg} [\mathcal{I}_{d}(x)]+\text{arg}[g_{d}(x)]\\
&=-(2d+1)\arctan (|\alpha |)+\tau\log |\alpha | +O(1)
\end{split}
\]
Use Cauchy's estimate to obtain, for large enough $d$,
\[
\begin{split}
\frac{d}{d\tau} \text{arg} \left\{ \frac{3C_{2}}{2C_{3}}\mathcal{E}_{d}(\alpha )\alpha^{x}\mathcal{I}_{d}(x)g_{d}(x)\right\} &=\left\{ -(2d+1)\frac{1}{1+|\alpha |^{2}}-\frac{\tau}{|\alpha |}\right\} \frac{d}{d\tau}|\alpha |+O(1)\\
&=\left\{ -(2d+1)\frac{1}{1+|\alpha |^{2}}-\frac{\tau}{|\alpha |}\right\} \frac{d}{\tau \sqrt{d^{2}-\tau^{2}}}|\alpha |+O(1)<0,
\end{split}
\]
\begin{figure}[htp]
\centering
\includegraphics[scale=.5,bb=50 80 800 550]{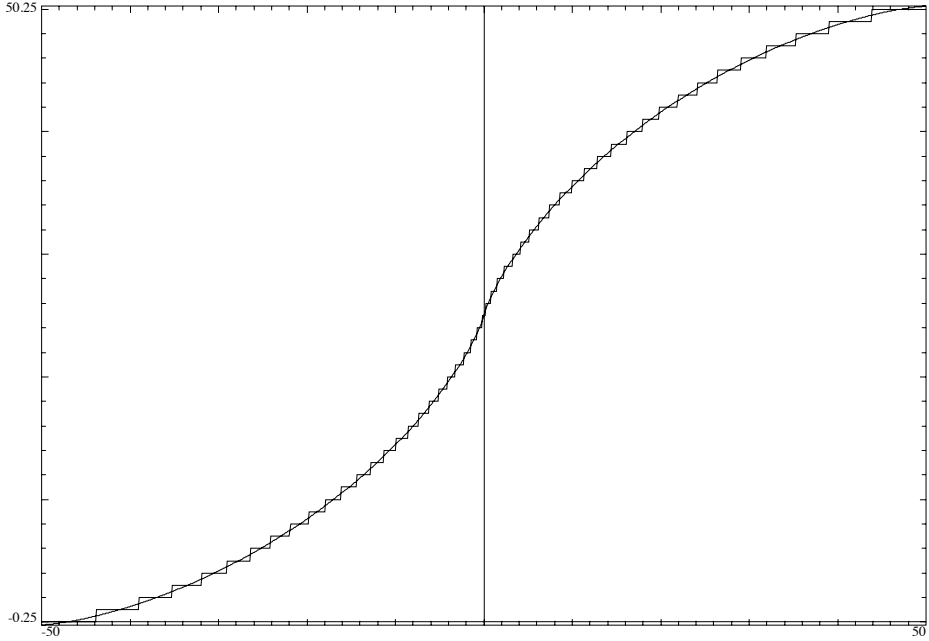}
\caption{Smooth line gives our approximation of the root distribution of $\mathcal{L}_{50}$.  The step function gives the cumulative histogram of the roots of $\mathcal{L}_{50}$.}
\end{figure}
It follows that for large enough $d$, $\text{arg} \left\{ \frac{3C_{2}}{2C_{3}}\mathcal{E}_{d}(\alpha )\alpha^{x}\mathcal{I}_{d}(x)g_{d}(x)\right\} $ is a monotone function of $\tau$.  Now, notice that when $|x|=O(\sqrt{\log d})$, then, $\text{arg} \left\{ \frac{3C_{2}}{2C_{3}}\mathcal{E}_{d}(\alpha )\alpha^{x}\mathcal{I}_{d}(x)\right\} -\text{arg} \left\{ (2d+1)^{-x}\Gamma (x)\right\}=O(1)$.  Then, if $-d+\sqrt[6]{d}<a<b<d-\sqrt[6]{d}$, as $d\rightarrow +\infty$:
\[
\# \{ x\in S_{d}:\text{$x$ is root of $\mathcal{L}_{d}$ with $\Im{x} \in [a,b]$}\} \sim \frac{1}{\pi}\{ \text{arg}[\mathcal{F}(d,1/2-ib)]-\text{arg}[\mathcal{F}(d,1/2-ia)]\},
\]
as $d\rightarrow \infty$.

\begin{figure}[htp]
\centering
\includegraphics[width=\textwidth,,height=5cm,bb=50 80 800 600]{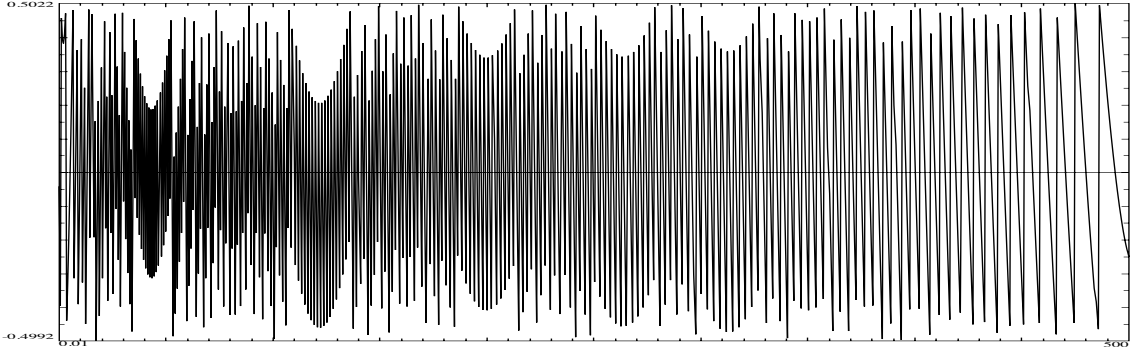}
\caption{Error from the counting function of the roots of $\mathcal{L}_{500}$ on $[0,500]$; i.e.\ $\# \{ x\in S_{500}:\text{$x$ is root of $\mathcal{L}_{500}$ with $\Im{x} \in [0,\tau ]$}\} -\frac{1}{\pi}\{ \text{arg}[\mathcal{F}(500,1/2-i\tau )]-\text{arg}[\mathcal{F}(500,1/2-i0)]\}$ for $\tau \in [0,500].$
}
\end{figure}

\subsection{The root of largest absolute value.}

Again,
\[
\begin{split}
&\text{arg} \left\{ \frac{3C_{2}}{2C_{3}}\mathcal{E}_{d}(\alpha )\alpha^{x}\mathcal{I}_{d}(x)g_{d}(x)\right\} =\\
&=\text{arg}\frac{3C_{2}}{2C_{3}}+(2d+1)\text{arg}(1+\alpha )+\text{arg}(\alpha^{x})+\text{arg} [\mathcal{I}_{d}(x)]+\text{arg}[g_{d}(x)]\\
&=-\frac{\pi}{2}-(2d+1)\arctan (|\alpha |)+\tau\log |\alpha |-\frac{\pi}{4}+\text{arg}[\mathcal{I}_{d}(x)]+O(\tau^{-1/28})
\end{split}
\]
Assume $d$ is odd.  The $d$ even case is similar.  Equation (\ref{root equation ii}) allows us to estimate all the roots of $\mathcal{L}_{d}$ that are of absolute value $O(d-\sqrt[6]{d})$, the range of validity of our approximation.  Assuming that the largest root of $\mathcal{L}_{d}(-x)$ is $x=1/2+i\tau=O(d-\sqrt[6]{d})$, we could obtain an estimate by solving $\text{arg} \left\{ \frac{3C_{2}}{2C_{3}}\mathcal{E}_{d}(\alpha )\alpha^{x}\mathcal{I}_{d}(x)g_{d}(x)\right\} =-[(d-1)/2]\pi$, or, 
\begin{equation} \label{largest root equation}
-(2d+1)\arctan (|\alpha |)+\tau\log |\alpha |=-\frac{2d-5}{4}\pi-\text{arg}[\mathcal{I}_{d}(x)]+O(\tau^{-1/28})
\end{equation}
In fact, $\mathcal{L}_{d}$ is a polynomial with real coefficients and $d$ distinct roots.  This last being a consequence of the monotony of $\text{arg} \left\{ \frac{3C_{2}}{2C_{3}}\mathcal{E}_{d}(\alpha )\alpha^{x}\mathcal{I}_{d}(x)g_{d}(x)\right\}$.  So if we manage to solve (\ref{largest root equation}) in the desired range, we will have found the largest root.  By (\ref{alpha}), if $\tau =o(d)$ as $d\rightarrow \infty$, then, the RHS of the above equation is $o(d)$.  So it must be that $\tau =\Omega (d)$.  Assume, for the moment, that $\tau =d-f(d)$, where $f(d)=\Omega (\sqrt[6]{d})$ and positive.  Let $y=\sqrt{f(d)/d}$.  Then, using (\ref{alpha}), we may write (\ref{largest root equation}) in terms of $y$:
\[
\frac{1}{\sqrt{2}}y+\frac{16d+1}{\sqrt{288}}y^{3}+O(dy^{5})=\frac{3}{2}\pi -\text{arg}[\mathcal{I}_{d}(x)]+O(d^{-1/28})
\]
By (\ref{I argument bound}), $\frac{3}{2}\pi -\text{arg}[\mathcal{I}_{d}(x)]$ is positive and bounded away from zero.  It then follows that:
\[
\frac{4d}{3\sqrt{2}}y^{3}=\frac{3}{2}\pi -\text{arg}[\mathcal{I}_{d}(x)]+O(d^{-1/28})
\]
,or,
\[
f(d)=\sqrt[3]{d}\left( \frac{9}{8}\right)^{1/3}\left\{ \frac{3}{2}\pi -\text{arg}[\mathcal{I}_{d}(x)]+O(d^{-1/28})\right\}^{2/3}
\]
In any case we have shown that the largest root has imaginary part $\tau$, such that $d-\tau =\Theta (\sqrt[3]{d})$. 
\begin{table}[ht]
\caption{$\tau$ is the imaginary part of the largest root of $\mathcal{L}_{d}$} 
\centering  
\begin{tabular}{c c c } 
\hline\hline                        
d & $\tau$ & $\frac{d-\tau}{\sqrt[3]{d}}$ \\ [0.5ex] 
\hline                  
100 & 91.9987057014 & 1.7238266002\\ 
200 & 189.7372321215 & 1.7549086218\\ 
300 & 288.1562327578 & 1.7692238245\\ 
400 & 386.8992027271 & 1.7780517454\\ 
500 & 485.8385218444 & 1.7842344425\\ 
600 & 584.9118679958 & 1.7888958567\\ 
700 & 684.0835726177 & 1.7925842603\\ 
800 & 783.3310874715 & 1.7956041698\\ 
900 & 882.6391445854 & 1.7981404758\\ 
1000 & 981.9968699646 & 1.8003130035\\ [1ex]      
\hline 
\end{tabular}
\label{table:nonlin} 
\end{table}

\section{Proof of the asymptotic formula.}

Let $\{ x_{d}\}_{d\geq 0}$ be a sequence of complex number such that $x_{d}=\sigma_d +i\tau_{d}$, where $\sigma_d, \tau_{d} \in \mathbb{R}$.  Let $\epsilon \in (0,1)$.  Assume $\sigma_{d}\in [\epsilon ,1-\epsilon ]$ and $|\tau_{d} |\in [0,d-\sqrt[6]{d}]$.  In this section we use the method of steepest descents to study the asymptotic value of:
\begin{equation}
\int_{\Gamma}\mathcal{E}_{d}(t)t^{-x_{d}-1}dt
\end{equation}
as $d\rightarrow +\infty$, where $\Gamma$ is as in (\ref{integral}).  The basic result needed for the implementation of the method is Watson's lemma, which we subsequently introduce.  Most of the method of steepest descent consists in modifying integration contours in a way that allows for the use of the lemma.  From now on we drop the subindex in most of the quantities that vary with $d$.  We hope this last will make our notation clearer.

\section{Watson's lemma} \label{Asymptotic lemma}

A basic result needed for the application of the method of steepest descents is Watson's Lemma (see \cite{Olver74}), which deals with integrals of the form:
\begin{equation} \label{basic laplace}
\int_{0}^{a}e^{-X\phi (t)}q(t)dt
\end{equation}
as $X\rightarrow +\infty$, where $\phi(t)$ is some complex valued function of the form $t+O(t^{2})$ as $t\rightarrow 0^{+}$ that has an increasing real part on $[0,a]$, and, $q(t)\sim t^{\alpha-1}$ as $t\rightarrow 0^{+}$.  A typical version is: 
\begin{thm}
Let $q(t)$ be a function of a positive real variable $t$, such that
\[
q(t)\sim \sum_{s\geq 0}a_{s}t^{(s+\lambda-\mu)/\mu}\quad (t\rightarrow 0)
\]
where $\lambda$ and $\mu$ are positive constants. 
\begin{equation} \label{watson}
\int_{0}^{\infty}e^{-xt}q(t)dt\sim \sum_{s\geq 0}\Gamma \left( \frac{s+\lambda}{\mu}\right) \frac{a_{s}}{x^{(s+\lambda)/\mu}}\quad (x\rightarrow \infty)
\end{equation}
provided the integral converges throughout its range for all sufficiently large $x$.
\end{thm}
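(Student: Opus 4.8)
The plan is to run the classical Laplace-type localization argument: the asymptotic information on $q$ lives only near $t=0$, so one isolates a neighbourhood of the origin, replaces $q$ there by the initial segment of its expansion, and checks that everything discarded is negligible to all algebraic orders in $x$. Fix an integer $N\geq 1$; it suffices to prove the expansion with error $O\bigl(x^{-(N+\lambda)/\mu}\bigr)$, since $N$ is arbitrary and the exponents $(s+\lambda)/\mu$ strictly increase in $s$. First I would choose $\delta>0$ and split $\int_0^\infty e^{-xt}q(t)\,dt=\int_0^\delta+\int_\delta^\infty$. For the tail I would invoke the convergence hypothesis: pick $x_0$ for which the integral converges, so that for $x>x_0$,
\[
\left\vert \int_\delta^\infty e^{-xt}q(t)\,dt\right\vert \leq e^{-(x-x_0)\delta}\int_\delta^\infty e^{-x_0 t}\abs{q(t)}\,dt = O\bigl(e^{-\delta x}\bigr),
\]
which is $o\bigl(x^{-(N+\lambda)/\mu}\bigr)$.

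For the contribution on $(0,\delta]$ I would write $q(t)=\sum_{s=0}^{N-1}a_s t^{(s+\lambda-\mu)/\mu}+R_N(t)$, and use the hypothesis $q(t)\sim\sum a_s t^{(s+\lambda-\mu)/\mu}$ to fix $\delta$ small enough that $\abs{R_N(t)}\leq K\,t^{(N+\lambda)/\mu-1}$ on $(0,\delta]$, for a constant $K=K(N)$. Each of the finitely many monomials is handled by extending the range back to $\infty$ at the cost of an exponentially small error and then using $\int_0^\infty e^{-xt}t^{(s+\lambda)/\mu-1}\,dt=\Gamma\!\left(\frac{s+\lambda}{\mu}\right)x^{-(s+\lambda)/\mu}$, obtained from the integral definition of $\Gamma$ by the substitution $u=xt$. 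The remainder is bounded by the same recipe:
\[
\left\vert \int_0^\delta e^{-xt}R_N(t)\,dt\right\vert \leq K\int_0^\infty e^{-xt}t^{(N+\lambda)/\mu-1}\,dt = K\,\Gamma\!\left(\frac{N+\lambda}{\mu}\right)x^{-(N+\lambda)/\mu}.
\]
Collecting the three pieces gives
\[
\int_0^\infty e^{-xt}q(t)\,dt = \sum_{s=0}^{N-1}\Gamma\!\left(\frac{s+\lambda}{\mu}\right)\frac{a_s}{x^{(s+\lambda)/\mu}}+O\!\left(x^{-(N+\lambda)/\mu}\right)\qquad(x\to\infty),
\]
which is precisely the assertion that the stated series is an asymptotic expansion of the integral.

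The proof has no genuinely hard step; the one point requiring care is the bookkeeping around the \emph{local} nature of the hypothesis on $q$ versus the \emph{global} control needed for $\int_\delta^\infty$ — the convergence assumption in the statement is exactly what supplies the latter — together with keeping track that $\delta$ and $K$ depend on $N$. Because of that dependence the argument yields a (generally divergent) asymptotic series rather than a convergent one, and no uniformity in $N$ is claimed or used. A minor auxiliary remark is that one should note $(s+\lambda)/\mu>0$ for all $s\geq0$, so every Gamma-integral above converges at the origin.
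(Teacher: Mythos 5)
Your argument is correct and is precisely the standard localization proof that the paper only sketches in the paragraph following the theorem statement: split at a small $\delta$, integrate the truncated expansion of $q$ term by term against $e^{-xt}$ via $\int_0^\infty e^{-xt}t^{(s+\lambda)/\mu-1}\,dt=\Gamma\bigl(\tfrac{s+\lambda}{\mu}\bigr)x^{-(s+\lambda)/\mu}$, and show that the remainder and the tail contribute only $O\bigl(x^{-(N+\lambda)/\mu}\bigr)$. The one step to tighten is your tail estimate: the hypothesis guarantees convergence, not absolute convergence, of $\int_0^\infty e^{-xt}q(t)\,dt$, so $\int_\delta^\infty e^{-x_0 t}|q(t)|\,dt$ need not be finite; the standard repair (as in Olver, the paper's reference) is to integrate by parts against the bounded primitive $\Phi(t)=\int_\delta^t e^{-x_0 s}q(s)\,ds$, which still yields an exponentially small tail and leaves the rest of your write-up unchanged.
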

The proof of Watson's lemma is a consequence of the fact that the larger part of the area under the graph of $e^{-xt}$, lies over ever smaller intervals of the form $[0,\epsilon)$ as $x$ grows.  This implies that the value of the integral in (\ref{watson}) depends mostly on the asymptotic expansion of $q(t)$ when $t$ is close to 0, the main term in the formula coming from:
\[
\int_{0}^{\epsilon}e^{-xt}\sum_{s=0}^{M}a_{s}t^{(s+\lambda)/\mu}dt\sim \int_{0}^{\infty}e^{-xt}\sum_{s=0}^{M}a_{s}t^{(s+\lambda)/\mu}dt=\sum_{s=0}^{M}\Gamma \left( \frac{s+\lambda}{\mu}\right) \frac{a_{s}}{x^{(s+\lambda)/\mu}}
\]

In our results the functions $\phi(t)$ and $q(t)$ in (\ref{basic laplace}) will vary with $x$.  This will require that we take care about how things change as $x$ grows.  The following result can be found in any book on asymptotics, and, is basically a reformulation of Watson's lemma in a form which is convenient for subsequent proofs.  
\begin{lem}\label{low-lying lemma}
Let $\{ B_{n}\}_{n\geq 0}$ be a sequence of positive numbers such that $B_{n}\rightarrow +\infty$.  Let $\{ \phi_{n}(t)\}_{n\geq 0}$ be a sequence of functions, analytic on the unit disk, such that:
\begin{enumerate}
\item Each $\phi_{n}(t)$ is increasing on $[0,1)$.
\item $\phi_{n}(t)=t+O(t^{2})$ as $t\rightarrow 0^{+}$ and uniformly for all $n$.
\end{enumerate}
Let $\epsilon >0$.  Then, uniformly for all $x$ with $\Re{x}\geq \epsilon$:
\begin{equation} \label{main asymptotic}
\int_{0}^{1}e^{-B_{n}\phi_{n}(t)}t^{x-1}dt=B_{n}^{-x}\Gamma (x)+O( B_{n}^{-\Re x-1})
\end{equation}
as $n\rightarrow \infty$.
\end{lem}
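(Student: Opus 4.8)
The plan is to establish the lemma as a uniform version of the standard proof of Watson's lemma: localise the integral near $t=0$, replace $\phi_n(t)$ there by its linearisation $t$, and bound the resulting pieces by explicit Laplace and Gamma integrals. To set up, by hypothesis (2) I first fix, once and for all, a constant $C>0$ and a number $\delta\in(0,1)$, both independent of $n$, such that $|\phi_n(t)-t|\le Ct^2$ for $0\le t\le\delta$ and all $n$; after shrinking $\delta$ I may assume in addition $C\delta\le 1/2$, so that $\Re{\phi_n(t)}\ge t-Ct^2\ge t/2$ on $[0,\delta]$, while hypothesis (1) together with $\phi_n(0)=0$ (itself a consequence of (2)) gives $\Re{\phi_n(t)}\ge\Re{\phi_n(\delta)}\ge\delta/2$ for $t\in[\delta,1)$. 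The crucial auxiliary object is an intermediate scale $\eta_n$: I would take $\eta_n=B_n^{-1/2}$, so that $\eta_n\to 0$, $B_n\eta_n=B_n^{1/2}\to\infty$ and $B_n\eta_n^2=1$, and in particular $\eta_n<\delta$ for all large $n$.

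I would then split $\int_0^1=\int_0^{\eta_n}+\int_{\eta_n}^{\delta}+\int_{\delta}^1$. The last two pieces are exponentially small. On $[\eta_n,\delta]$ one has $|e^{-B_n\phi_n(t)}|=e^{-B_n\Re{\phi_n(t)}}\le e^{-B_nt/2}$, so this piece is bounded in modulus by
\[
\int_{\eta_n}^{\infty}e^{-B_nt/2}t^{\Re{x}-1}\,dt=(2/B_n)^{\Re{x}}\,\Gamma\bigl(\Re{x},\,B_n\eta_n/2\bigr),
\]
and since $B_n\eta_n\to\infty$ the upper incomplete Gamma function on the right decays faster than any power of $B_n$. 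On $[\delta,1)$ one has $|e^{-B_n\phi_n(t)}|\le e^{-B_n\delta/2}$, so that piece is at most $e^{-B_n\delta/2}\int_{\delta}^{1}t^{\Re{x}-1}\,dt\le\epsilon^{-1}e^{-B_n\delta/2}$. Both contributions are therefore $O(B_n^{-\Re{x}-1})$, uniformly (certainly so once $\Re{x}$ is restricted to a bounded interval, which is the case in all our applications).

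The heart of the matter is the interval $[0,\eta_n]$, where I would linearise the phase: write $e^{-B_n\phi_n(t)}=e^{-B_nt}\bigl(1+(e^{-B_n(\phi_n(t)-t)}-1)\bigr)$, and note that for $t\in[0,\eta_n]$ one has $B_n|\phi_n(t)-t|\le CB_n t^2\le CB_n\eta_n^2=C$, so $|e^{-B_n(\phi_n(t)-t)}-1|\le e^{C}B_n|\phi_n(t)-t|\le Ce^{C}B_n t^2$. Consequently
\[
\int_0^{\eta_n}e^{-B_n\phi_n(t)}t^{x-1}\,dt=\int_0^{\eta_n}e^{-B_nt}t^{x-1}\,dt+R_n,\qquad |R_n|\le Ce^{C}B_n\int_0^{\eta_n}e^{-B_nt}t^{\Re{x}+1}\,dt,
\]
and bounding the last integral by the full Laplace transform gives $|R_n|\le Ce^{C}\Gamma(\Re{x}+2)B_n^{-\Re{x}-1}=O(B_n^{-\Re{x}-1})$. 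For the remaining term I would extend the range of integration to $\infty$:
\[
\int_0^{\eta_n}e^{-B_nt}t^{x-1}\,dt=\int_0^{\infty}e^{-B_nt}t^{x-1}\,dt-\int_{\eta_n}^{\infty}e^{-B_nt}t^{x-1}\,dt=B_n^{-x}\Gamma(x)+O\bigl(B_n^{-\Re{x}}\Gamma(\Re{x},B_n\eta_n)\bigr),
\]
where once again the error beats every power of $B_n$ because $B_n\eta_n\to\infty$. Adding the three pieces yields $\int_0^1 e^{-B_n\phi_n(t)}t^{x-1}\,dt=B_n^{-x}\Gamma(x)+O(B_n^{-\Re{x}-1})$ uniformly for $\Re{x}\ge\epsilon$, which is (\ref{main asymptotic}).

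The one genuinely delicate point — and the step I expect to be the main obstacle — is the calibration of $\eta_n$. It must be small enough that $B_n(\phi_n(t)-t)$ stays bounded on $[0,\eta_n]$, so that $e^{-B_n(\phi_n(t)-t)}-1$ can be linearised and folded into the Gamma integral $B_n\int_0^{\eta_n}e^{-B_nt}t^{\Re{x}+1}\,dt$, which produces exactly the exponent $-\Re{x}-1$; here it is essential to integrate against the full weight $e^{-B_nt}$, since the crude bound $e^{-B_nt}\le 1$ would lose a factor of $B_n$ and wreck the estimate. At the same time $\eta_n$ must be large enough, $B_n\eta_n\to\infty$, that every tail past $\eta_n$ is exponentially small. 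The assumption $B_n\to\infty$ is exactly what makes such a window $[0,\eta_n]$ available, and the uniform form of hypothesis (2) — a single $C$ and $\delta$ working for all $n$ — is what renders all the bounds uniform.
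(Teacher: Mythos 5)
Your proposal is correct and follows essentially the same route as the paper's proof: localise near $t=0$, replace $e^{-B_n\phi_n(t)}$ by $e^{-B_nt}$, absorb the correction $e^{-B_n(\phi_n(t)-t)}-1=O(B_nt^2)$ into the weighted integral $B_n\int_0^{\cdot}e^{-B_nt}t^{\Re{x}+1}\,dt\ll B_n^{-\Re{x}-1}$, and show the tails are negligible. The only (immaterial) difference is the cutoff: you take $\eta_n=B_n^{-1/2}$ and control the correction via the bound $|e^u-1|\le e^{|u|}|u|$ with $|u|\le C$, whereas the paper takes $\alpha=(\Re{x}+1)\log B_n/B_n=o(B_n^{-1/2})$; both choices land in the admissible window you describe.
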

\begin{proof}
Since $\phi_{n}(t)\sim t$ uniformly for all $n$ as $t\rightarrow 0$, $\phi_{n}(\alpha )>\alpha /2$ for all small enough $\alpha \in (0,1)$ and for all $n$.  Then, since all $\phi_{n}$ are increasing,
\begin{multline} \label{easy bound}
I_{0}=\int_{\alpha}^{1}e^{-B_{n}\phi_{n}(t)}t^{x-1}dt\ll  \\
\ll e^{-B_{n}\phi_{n}(\alpha )}\int_{\alpha}^{1}t^{\Re x-1}dt\\
\ll O_{\Re x}\{ e^{-\frac{B_{n}\alpha}{2}}\},
\end{multline}
as $n\rightarrow +\infty$.
\begin{equation}
\begin{split}
&\int_{0}^{\alpha}e^{-B_{n}\phi_{n}(t)}t^{x-1}dt=\\
&=\int_{0}^{\alpha}e^{-B_{n}t}e^{B_{n}[t-\phi_{n}(t)]}t^{x-1}dt\\ 
&=\int_{0}^{\infty}e^{-B_{n}t}t^{x-1}dt-\int_{\alpha}^{\infty}e^{-B_{n}t}t^{x-1}dt+\int_{0}^{\alpha}e^{-B_{n}t}\{e^{B_{n}[t-\phi_{n}(t)]}-1\}t^{x-1}dt\\
&=B_{n}^{-x}\Gamma(x)-I_{1}+I_{2}
\end{split}
\end{equation}
Now,
\begin{equation} \label{E1}
|I_{1}|=\left| \int_{\alpha}^{\infty}e^{-B_{n}t}t^{x-1}dt\right| \leq e^{-(B_{n}-1)\alpha}\int_{\alpha}^{\infty}e^{-t}t^{\Re x-1}dt\ll _{\Re x}e^{-B_{n}\alpha} \\,
\end{equation}
as $n\rightarrow +\infty$.  

By item (2) in the statement of the lemma, $f(t)=e^{B_{n}O(t^{2})}$ for $t=o(B_{n}^{-\frac{1}{2}})$ as $n\rightarrow \infty$, and uniformly for all $n$.  Then, there is $C>0$ such that $f(t)-1\ll e^{CB_{n}t^{2}}-1=O(B_{n}t^{2})$ for $t$ and $n$ as before.  Taking $\alpha =o(B_{n}^{-\frac{1}{2}})$ as $n\rightarrow \infty$,  
\begin{equation}
\begin{split} \label{E2}
I_{2}&=\int_{0}^{\alpha}e^{-B_{n}t}\{e^{B_{n}[t-\phi_{n}(t)]}-1\}t^{x-1}dt\\
&\ll \int_{0}^{\alpha}e^{-B_{n}t}B_{n}t^{\Re x+1}dt\\
&\ll B_{n}^{-\Re x-1}\Gamma (\Re x)  
\end{split}
\end{equation}
as $n\rightarrow \infty$.

Finally, take $\alpha =(\Re x+1)\frac{\log{B_{n}}}{B_{n}}$ in all the above estimates (notice that $\alpha$ is still smaller than $\frac{1}{\sqrt{B_{n}}}$ for large enough $n$).  We get that, as $n\rightarrow \infty$,
\begin{equation}
I_{0},I_{1},I_{2}\ll B_{n}^{-\Re x-1}
\end{equation}
Putting all the estimates together:
\begin{equation}
\int_{0}^{1}e^{-B_{n}\phi_{n}(t)}t^{x-1}dt=B_{n}^{-x}\Gamma(x)+O_{\Re x}\{ B_{n}^{-\Re x-1}\},
\end{equation}
as $n\rightarrow \infty$.
\end{proof}

\subsection{The case $|\tau |\in [0,\log {d}]$}
We begin by restricting $\tau$ to interval $[0,\log {d}]$.  To obtain all the subsequent results in the $\tau <0$ case, we just have to notice that $\mathcal{L}_{d}(\overline{x})=\overline{\mathcal{L}_{d}(x)}$.  A direct application of Lemma \ref{low-lying lemma} is enough to deal with the present case.  We will use a Mellin transform representation: 
\begin{prop} 
If $\Re{x}<0$, then,
\[ \label{mellin formula}
\mathcal{L}_{d}(x)=-\frac{\sin \pi x}{\pi}\int_{0}^{\infty}\mathcal{E}_{d}(-t)t^{-x-1}dt
\]
\end{prop}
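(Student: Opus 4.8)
The plan is to take the contour integral $\int_{\Gamma}\mathcal{E}_{d}(t)t^{-x-1}\,dt$ underlying (\ref{integral}) and deform $\Gamma$ until it collapses onto the branch cut of $t^{-x-1}$ along the negative real axis; the discontinuity of $t^{-x-1}$ across that cut produces the factor $\sin\pi x$, while the two radial edges assemble into the Mellin integral $\int_{0}^{\infty}\mathcal{E}_{d}(-t)t^{-x-1}\,dt$ on the right-hand side.

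Concretely, fix the principal branch $t^{-x-1}=e^{(-x-1)\log t}$, holomorphic on $\mathbb{C}\setminus(-\infty,0]$. Since $\mathcal{E}_{d}$ is holomorphic on $\mathbb{C}\setminus\{1\}$ and $\mathcal{E}_{d}(t)=O(1/|t|)$ as $|t|\to\infty$, the product $\mathcal{E}_{d}(t)t^{-x-1}$ is holomorphic on $\mathbb{C}\setminus((-\infty,0]\cup\{1\})$, so I may take $\Gamma$ to be a small positively oriented circle about $1$. By the residue theorem $\int_{\Gamma}\mathcal{E}_{d}(t)t^{-x-1}\,dt=2\pi i\,\mathrm{Res}_{t=1}[\mathcal{E}_{d}(t)t^{-x-1}]$; expanding the order-$(d+1)$ pole as in the computation following (\ref{integral}) and using the binomial identity $\binom{j-x-1}{d}=(-1)^{d}\binom{d-j+x}{d}$ together with (\ref{ehrhart}) gives $\mathrm{Res}_{t=1}[\mathcal{E}_{d}(t)t^{-x-1}]=-\mathcal{L}_{d}(x)$, i.e.\ $\int_{\Gamma}\mathcal{E}_{d}(t)t^{-x-1}\,dt=-2\pi i\,\mathcal{L}_{d}(x)$.

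For $R>1$ and small $\delta>0$, let $D$ be the disk $\{|t|<R\}$ with the slit $[-R,-\delta]$ and the disk $\{|t|\le\delta\}$ deleted; it contains $1$, the integrand is holomorphic on it, and $\Gamma$ is homologous there to the positively oriented boundary $\partial D$, which splits into the outer arc $|t|=R$, the inner arc $|t|=\delta$, and the upper and lower edges of the slit. On the outer arc $\mathcal{E}_{d}(t)=O(R^{-1})$ and $|t^{-x-1}|=O(R^{-\Re{x}-1})$, so that piece is $O(R^{-\Re{x}-1})\to0$, which forces $\Re{x}>-1$ (exactly what makes the right-hand side converge at $\infty$); on the inner arc $\mathcal{E}_{d}$ is bounded and $|t^{-x-1}|=O(\delta^{-\Re{x}-1})$, so that piece is $O(\delta^{-\Re{x}})\to0$, which uses $\Re{x}<0$. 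On the two edges, parametrising $t=-u$ with $\log t=\log u\pm i\pi$ gives $t^{-x-1}=-u^{-x-1}e^{\mp i\pi x}$ and $\mathcal{E}_{d}(t)=\mathcal{E}_{d}(-u)$; letting $R\to\infty$, $\delta\to0$ and adding the two edge integrals with their induced orientations yields $(e^{i\pi x}-e^{-i\pi x})\int_{0}^{\infty}\mathcal{E}_{d}(-u)u^{-x-1}\,du=2i\sin\pi x\int_{0}^{\infty}\mathcal{E}_{d}(-u)u^{-x-1}\,du$. Equating this with $-2\pi i\,\mathcal{L}_{d}(x)$ gives the stated identity (the hypothesis being understood as $-1<\Re{x}<0$, the strip on which the integral converges).

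The main obstacle is not conceptual but one of bookkeeping: the signs from the residue at $t=1$, from the orientations of the four pieces of $\partial D$, and from the boundary values $e^{\pm i\pi(-x-1)}$ of the branch of $t^{-x-1}$ on the two sides of the cut must all be tracked consistently, since a single slip flips the sign of the answer. To guard against this I would also carry out the direct check: expand $(1-t)^{d}=\sum_{k}\binom{d}{k}(-t)^{k}$ in $\mathcal{E}_{d}(-t)=(1-t)^{d}(1+t)^{-d-1}$, integrate term by term with Euler's beta integral $\int_{0}^{\infty}t^{s-1}(1+t)^{-d-1}\,dt=\Gamma(s)\Gamma(d+1-s)/d!$ (valid for $0<\Re{s}<d+1$, i.e.\ here exactly for $-1<\Re{x}<0$), and then apply the reflection formula $\Gamma(k-x)\Gamma(1-k+x)=(-1)^{k+1}\pi/\sin\pi x$ and the identity $\prod_{j=1}^{d}(x+j-k)=d!\,\binom{d-k+x}{d}$ to recover, via (\ref{ehrhart}), $\sum_{k}\binom{d}{k}\binom{d-k+x}{d}=\mathcal{L}_{d}(x)$.
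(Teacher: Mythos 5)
Your argument is correct, but it reaches the identity by a genuinely different route from the paper. The paper proves the proposition by direct computation on the right-hand side: it expands $(1-t)^{d}$ binomially in $\mathcal{E}_{d}(-t)=(1-t)^{d}(1+t)^{-d-1}$, evaluates each term with Euler's Beta integral, and reassembles the sum into $-\frac{\pi}{\sin \pi x}\mathcal{L}_{d}(x)$ via the reflection formula, the Pochhammer symbols and (\ref{ehrhart}) --- i.e.\ exactly the ``direct check'' you relegate to your final paragraph. Your primary argument instead deforms the residue contour of (\ref{integral}) onto a keyhole around the cut of $t^{-x-1}$, so that the factor $\sin \pi x$ appears conceptually as the jump of the branch across $(-\infty,0]$. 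This buys two things: it explains the passage between the Cauchy-type representation (\ref{integral}) and the Mellin representation rather than verifying it term by term, and it makes transparent why the strip $-1<\Re{x}<0$ is forced (decay on the outer and inner arcs), a restriction the paper's statement omits but which its Beta integrals $B(j-x,\,d+1-j+x)$ also require. Two small points of bookkeeping. First, ``the integrand is holomorphic on it'' should read ``on $D\setminus\{1\}$''; that is the region across which $\Gamma$ is deformed to $\partial D$. Second, your value $\mathrm{Res}_{t=1}\left[\mathcal{E}_{d}(t)t^{-x-1}\right]=-\mathcal{L}_{d}(x)$ is the correct one (for $d=1$ the residue of $(1+t)t^{-x-1}(1-t)^{-2}$ at $1$ is $-(2x+1)$), even though it conflicts with the sign displayed in the derivation following (\ref{integral}): there the step from $(-1)^{d+1}\binom{j-x-1}{d}$ to $\binom{d-j+x}{d}$ drops a minus sign, since $\binom{j-x-1}{d}=(-1)^{d}\binom{d-j+x}{d}$. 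Your signs are internally consistent and yield the proposition as stated, which the numerical check confirms is the correct identity.
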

\begin{proof}
Let $B(x,y)=\frac{\Gamma (x)\Gamma (y)}{\Gamma (x+y)}$ be the Beta function.  Let $(a)_{j}=\frac{\Gamma (a+j)}{\Gamma (a)}$ be the Pochhammer symbol.  Then, by well known formulas,
\[
\begin{split}
\int_{0}^{\infty}\mathcal{E}_{d}(-t)t^{-x-1}dt&=\int_{0}^{\infty}\frac{(1-t)^{d}}{(1+t)^{d+1}}t^{-x-1}dt\\
&=\sum_{j=0}^{d}\binom{d}{j}(-1)^{j}\int_{0}^{\infty}\frac{t^{j-x-1}}{(1+t)^{d+1}}dt\\
&=\sum_{j=0}^{d}\binom{d}{j}(-1)^{j}B(j-x,d+1-x)\\
&=\sum_{j=0}^{d}\binom{d}{j}(-1)^{j}\frac{\Gamma (j-x)\Gamma (d+1-j+x)}{\Gamma (d+1)}\\
&=\Gamma (-x)\Gamma(1+x)\sum_{j=0}^{d}\binom{d}{j}(-1)^{j}\frac{(-x)_{j}(1+x)_{d-j}}{d!}\\
&=-\frac{\pi}{\sin \pi x}\sum_{j=0}^{d}\binom{d}{j}\binom{d+x-j}{d}\\
&=-\frac{\pi}{\sin \pi x}\mathcal{L}_{d}(x)
\end{split}
\]
\end{proof}
Using the functional equation:
\begin{equation} \label{functional}
\mathcal{E}_{d}(t^{-1})=(-1)^{d+1}t\mathcal{E}_{d}(t)
\end{equation}
We write:
\[
\begin{split}
\frac{\pi}{\sin \pi x}\mathcal{L}_{d}(-x)&=\int_{0}^{\infty}\mathcal{E}_{d}(-t)t^{x-1}dt\\
&=\int_{0}^{1}\mathcal{E}_{d}(-t)t^{x-1}dt+\int_{1}^{\infty}\mathcal{E}_{d}(-t)t^{x-1}dt\\
&=\int_{0}^{1}\mathcal{E}_{d}(-t)t^{x-1}dt+(-1)^{d+1}\int_{0}^{1}\mathcal{E}_{d}(-t)t^{(1-x)-1}dt
\end{split}
\]
About $\mathcal{E}_{d}(-t)$ we can say the following:
\begin{enumerate}
\item $\mathcal{E}_{d}(-t)$ is decreasing on $[0,1]$ for all $d$.
\item $\mathcal{E}_{d}(-t)=\exp \{ d\log (1-t)-(d+1)\log (1+t)\}=\exp \{ (2d+1)[t+O(t^{2})]\}$ as $t\rightarrow 0+$, uniformly for all $d$.
\end{enumerate}
Then, it is not hard to see that the hypotheses of theorem \ref{low-lying lemma} are satisfied, and, for all $x\in S_{d}$, and uniformly for all large enough $d$,
\begin{multline} \label{near zero formula}
\frac{\pi}{\sin \pi x}\mathcal{L}_{d}(-x)=\\
=(2d+1)^{-x}\Gamma (x)+O[(2d+1)^{-x-1}] +(-1)^{d+1}(2d+1)^{x-1}\Gamma (1-x)+O[(2d+1)^{x-2}]
\end{multline}

\subsection{The case $|\tau |\rightarrow +\infty$, $|\tau |\leq d-\sqrt[6]{d}$}
Next, we will take care of the case $|\tau |\in [\log{d}, d-\sqrt[6]{d}]$.  The method of steepest descents (two references are \cite{Olver74} and \cite{Erdelyi56}) deals with integrals of the form:
\begin{equation} \label{basic steepest}
I(X)=\int_{\Gamma}e^{X\phi (t)}q(t)dt,
\end{equation}
where $\Gamma \subset \mathbb{C}$, $X$ is some complex parameter, $\phi (t)$ and $q(t)$ are fuctions analytic in some domain containing $\Gamma$.  We are interested in what happens to the value of the integral in (\ref{basic steepest}) as $|X|\rightarrow +\infty$.  The idea of the method is to deform $\Gamma$ in such a way that $I(X)$ that is amenable to the application of Watson's lemma.  Roughly stated, the method for achieving this last goes as follows:
\begin{enumerate}
\item Find the saddle points of the function $|\exp [X\phi (t)]|$, that is, where $\phi'(t)=0$.  Assume that $a$ is one of these saddle points, and, the Taylor series of $\phi (t)$ at $a$ is $\phi(a)+\sum_{m\geq M}C_{m}(t-a)^{m}$, with $M\geq 2$ (remember that $\phi'(a)=0$) and $C_{M}\neq 0$.  We know that there are $M$ curves $\Gamma_{1},\ldots ,\Gamma_{M}$ which pass through $a$ which satisfy $\Im{X\phi(t)}=\Im{X\phi (a)}$.  These last are the curves along which the modulus of $e^{X\phi (t)}$ varies the fastest. 
\item Deform $\Gamma$ into a new curve $\Gamma'$, which passes through the saddle points of $|\exp [X\phi (t)]|$.  $\Gamma'$ must also adhere as closely as possible to the ``paths of steepest descent''.  These last are the $\Gamma_{k}$ above for which $\Re{XC_{M}}<0$.
\item We must check that the size of the integral on sections of $\Gamma'$ which are far from the saddle points become small as $|X|$ grows. 
\end{enumerate}
Given an integral of the form (\ref{basic steepest}), it is not at all obvious that the method above can be applied.  There have been some efforts to make the use of the method easier and more automatic.  For a useful discussion on this last topic see \cite{Lopez2009347}.  In this last the authors advice that instead of deforming the curve $\Gamma$ in (\ref{basic steepest}) onto the paths of steepest descent, we should deform $\Gamma$ so that it lies on a straight line tangent to the path of steepest descent, that is, integrate over the paths of steepest descent of $|\exp [XC_{M}(t-a)^{M}]|$, at least when inside the disks of convergence of the Taylor series of $\phi$ around the saddle point.  Outside of these convergence disks we take paths that minimize the absolute value of the integral.  In our case this method would work fine for most values that $\tau <d$ could take.  Problems do arise when $\tau$ is close to $d$.  More specifically, when $d-|\tau |=\Theta (\sqrt[3]{d})$.  This last is the ``coalescing saddle point case'' studied by Chester, Friedman and Ursell in \cite{CambridgeJournals:2048816}.   In this last case, it is easier to apply slightly modified version of the method outlined by Pagola et al. in \cite{Lopez2009347}.  That is, in the vicinity a saddle point $a$, we integrate instead over the path of steepest descent of $\exp \{ X[\sum_{m=M}^{M'}C_{m}(t-a)^{m}]\}$, for some $M'>M$.

We remind the reader that $\tau =\tau_{d}=\Im{-x_{d}}$.  Unless otherwise stated, we assume $\tau >0$.  Let $\mathcal{Q}_{d}(t)=\log \{ (\frac{1+t}{1-t})^{d}t^{i\tau}\}$.  We look for the zeros of $\mathcal{Q}'_{d}$, which are just the zeros of the polynomial $2dt+i\tau (1-t^{2})$.  These last are:
\begin{equation} \label{alpha}
\alpha_{\pm}=-i\left( \frac{d}{\tau}\pm \sqrt{\frac{d^{2}}{\tau^{2}}-1}\right)
\end{equation}
Three things to notice are that
\begin{enumerate}
\item $\alpha_{\pm}$ are pure imaginary.
\item $\alpha_{+}\alpha_{-}=-1$.
\item $i\alpha_{-}\in (0,d)$ and $i\alpha_{-}\ll \frac{\tau}{d}$.
\end{enumerate}
Let $\alpha=\alpha_{-}$.  The Taylor series of $\mathcal{Q}_{d}$ around $\alpha$ is then $\mathcal{Q}_{d}(\alpha )+\sum_{m\geq 2}\frac{C_{m}}{m}(t-\alpha )^{m}$, where:
\[
C_{m}=\frac{(-1)^{m+1}d}{(1+\alpha )^{m}}+\frac{d}{(1-\alpha )^{m}}+\frac{(-1)^{m+1}i\tau_{d}}{\alpha^{m}}
\]
We can estimate the size of the $C_{m}$ as follows.  Remember that $\alpha$ satisfies $2dt+i\tau (1-t^{2})=0$.
\begin{equation} \label{cm bound}
\begin{split}
C_{m}&=d\frac{(-1)^{m+1}(1-\alpha )^{m}+(1+\alpha )^{m}}{(1-\alpha^{2} )^{m}}+\frac{(-1)^{m+1}i\tau}{\alpha^{m}}\\
&=d\frac{(-1)^{m+1}(1-\alpha )^{m}+(1+\alpha )^{m}}{\left( -\frac{2d}{i\tau}\alpha \right)^{m}}+\frac{(-1)^{m+1}i\tau}{\alpha^{m}}\\
&=-\frac{i\tau}{(-\alpha )^{m}}\left\{ 1-\left( \frac{i\tau}{d}\right)^{m-1}\left[ (-1)^{m+1}\left( \frac{1-\alpha}{2}\right)^{m}+\left( \frac{1+\alpha}{2}\right)^{m}\right] \right\}\\
&\ll \frac{\tau}{\alpha^{m}}
\end{split}
\end{equation}
Notice that the bound is uniform in all the $\alpha$ and $d$, since $|\alpha |<1$ and $|\tau |<d$.   Using the above formula, and the fact that $\alpha $ is pure imaginary, we get:
\begin{equation}\label{km}
C_{m}=-(-i)^{m-1}\frac{\tau}{|\alpha|^{m}}K_{m},
\end{equation}
where $K_{m}$ is inside the interval $(0,2)$.  In particular,
\begin{equation} \label{k2}
K_{2}=\sqrt{1-\frac{\tau^{2}}{d^{2}}}
\end{equation}
Now, for $m\geq 3$,
\begin{equation}\label{c3}
\begin{split}
K_{m}&=1-\left(\frac{i\tau}{d}\right)^{m-1}\left[ (-1)^{m+1}\left( \frac{1-\alpha}{2}\right)^{m}+\left( \frac{1+\alpha}{2}\right)^{m}\right] \\
&=1-\left(\frac{i\tau}{d}\right)^{m-1}\begin{cases}
2\Re{\left( \frac{1+\alpha}{2}\right)^{m}};&\text{$m$ odd}\\
2i\Im{\left( \frac{1+\alpha}{2}\right)^{m}};&\text{$m$ even}
\end{cases}\\
&\geq 1-\left(\frac{\tau}{d}\right)^{m-1}2^{1-\frac{m}{2}}\\
&\geq 1-\frac{1}{\sqrt{2}}
\end{split}
\end{equation}
Here we used that $0<\tau <d$ and $0<i\alpha <1$.  Therefore, it is not difficult to see that $C_{3}\gg \frac{\tau}{|\alpha |^{3}}$, uniformly for all $\tau$.  At the same time, $C_{2}=o(\frac{\tau}{|\alpha |^{2}})$ whenever $\tau \sim d$ as $d\rightarrow \infty$ by (\ref{k2}).  These last two facts determine our choice of integration contours.  

Since $\mathcal{E}_{d}$ satisfies the functional equation (\ref{functional}), and is real on the real line, we can restrict ourselves to working inside $\mathbb{D}$.  This last will become clearer as we go through the calculations.  We first define the restriction of our integration path to $\mathbb{D}$:
\[
\Gamma_{d}\cap \mathbb{D}=\Delta_{d}
\]
\begin{figure}[htp]
\centering
\includegraphics[scale=.5,bb=50 10 300 300]{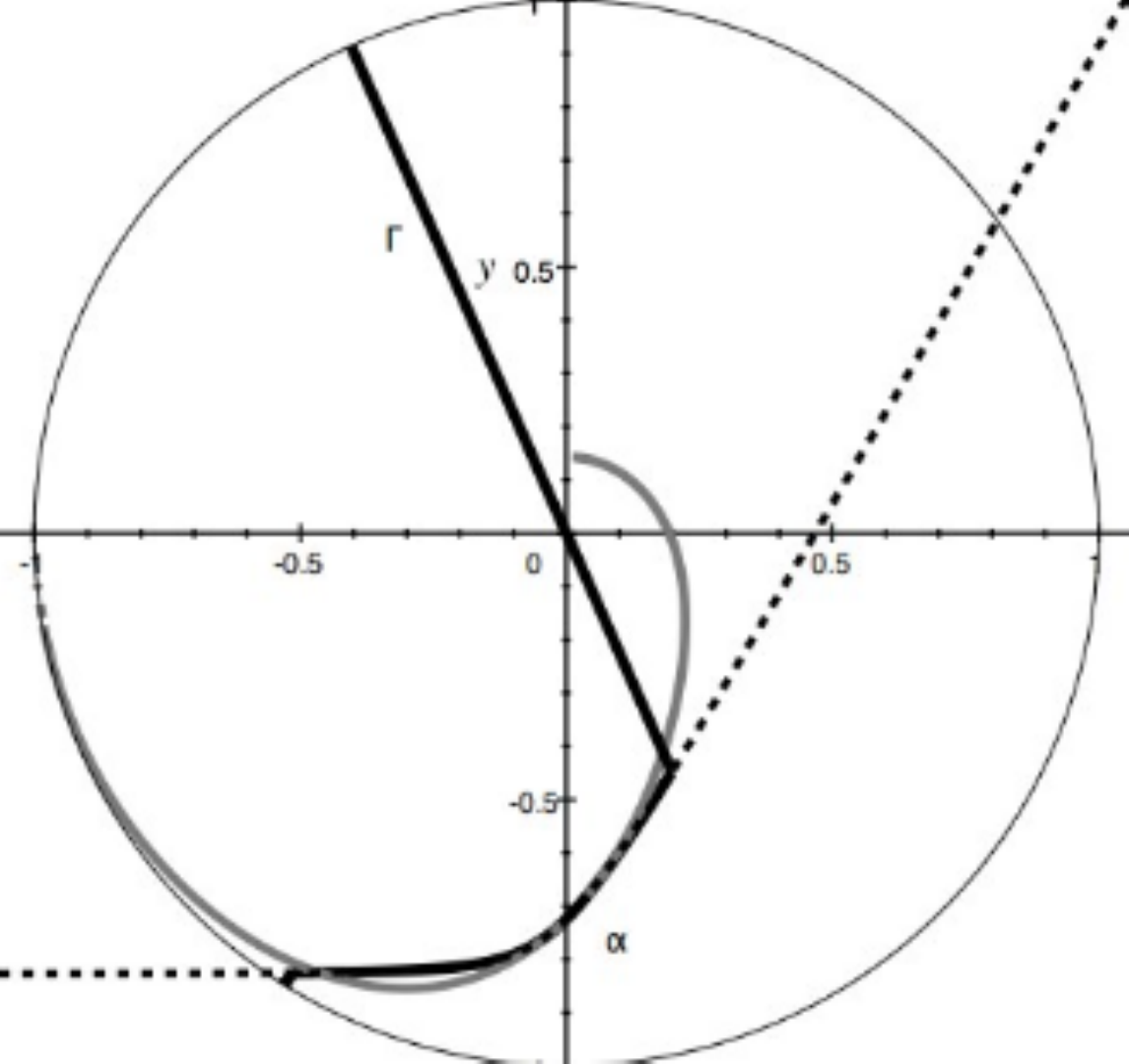}
\caption{Integration path $\Gamma_{d}$ restricted to $\mathbb{D}$ in solid black.  The actual steepest descent path is in gray.  The cubic through the saddle point $\alpha$ and tangent to the steepest descent path in dashes.}
\end{figure}
The reason for the subscript $d$ is that the path will actually vary with $d$.  First, we define the section of path which passes through the saddle point $\alpha$ and is tangent to path of steepest descent that goes through $\alpha$.  We choose the path of integration in such a way that $\Im{\frac{C_{2}}{2}(t-\alpha )^{2}+\frac{C_{3}}{3}(t-\alpha )^{3}}=0$ and $\Re{\frac{C_{2}}{2}(t-\alpha )^{2}+\frac{C_{3}}{3}(t-\alpha )^{3}}<0$.  Now, from (\ref{k2}) and (\ref{km}) we have:
\[
\frac{C_{2}}{C_{3}}=i|\alpha |\frac{K_{2}}{K_{3}}
\]
We finally define:
\[
\Delta_{\alpha}(T)=\alpha +i\frac{3C_{2}}{2C_{3}}T\left( \sqrt{\frac{T+1}{3T+1}}+i\right) =\alpha +\frac{3}{2}|\alpha |\frac{K_{2}}{K_{3}}T\left( \sqrt{\frac{T+1}{3T+1}}+i\right) ,
\]
We will determine the domain of $\Delta_{\alpha}$ below.  This last will be a subset of $(-\frac{1}{3},\infty )$, below.  We thus have:
\[
\begin{split}
&\frac{C_{2}}{2}(\Delta_{\alpha}(T)-\alpha )^{2}+\frac{C_{3}}{3}(\Delta_{\alpha}(T)-\alpha )^{3}=\\
&=\frac{C_{2}}{2}\left( \frac{3}{2}|\alpha |\frac{K_{2}}{K_{3}}\right)^{2}T^{2}\left( \sqrt{\frac{T+1}{3T+1}}+i\right)^{2}+\frac{C_{3}}{3}\left( \frac{3}{2}|\alpha |\frac{K_{2}}{K_{3}} \right)^{3}T^{3}\left( \sqrt{\frac{T+1}{3T+1}}+i\right)^{3}\\
&=\frac{9}{8}\tau \frac{K_{2}^{3}}{K_{3}^{2}}T^{2}\left[ i\left( \sqrt{\frac{T+1}{3T+1}}+i\right)^{2}+T\left( \sqrt{\frac{T+1}{3T+1}}+i\right)^{3}\right] \\
&=-\frac{9}{4}\tau \frac{K_{2}^{3}}{K_{3}^{2}}T^{2}\frac{(2T+1)^{2}\sqrt{T+1}}{(3T+1)^{\frac{3}{2}}}
\end{split}
\]
From the above equation define:
\begin{equation}\label{F}
F(T):=2T^{2}\frac{(2T+1)^{2}\sqrt{T+1}}{(3T+1)^{\frac{3}{2}}}
\end{equation}
$F(T)$ is positive on $(-\frac{1}{3},\infty )$, decreasing on $(-\frac{1}{3},0)$ and increasing on $[0,\infty )$.  To see this last just write:
\[
F'(T)=F(T)\frac{F'(T)}{F(T)}=F(T)\left\{ \frac{2}{T}+\frac{4}{2T+1}+\frac{\frac{1}{2}}{T+1}-\frac{\frac{9}{2}}{3T+1}\right\},
\] 
and, use the fact that $2T+1,T+1>3T+1>0$ when $T<0$, while $0<2T+1,T+1\leq 3T+1$ when $T\geq 0$.  

Write $\phi [\Delta_{\alpha}(T)]=-\frac{9}{4}\tau \frac{K_{2}^{3}}{K_{3}^{2}}F(T)-G(T)$.  We want $\Re{\frac{9}{4}\tau \frac{K_{2}^{3}}{K_{3}^{2}}F(T)+G(T)}$ to grow like $F(T)$ in the domain of definition of $\Delta_{\alpha}(T)$.   
Assuming we have chosen the domain of $\Delta_{\alpha}(T)$, we have:
\[
\begin{split}
&\int_{\Delta_{\alpha}}\mathcal{E}_{d}(t)t^{x-1}dt=\\
&=\int_{\Delta_{\alpha}}\exp \{ \mathcal{Q}(t)\}\frac{t^{\sigma -1}}{1-t}dt\\
&=\int_{J}\exp \left\{ \mathcal{Q}[\Delta_{\alpha}(T)]\right\}\frac{[\Delta_{\alpha}(T)]^{\sigma -1}}{1-\Delta_{\alpha}(T)}\Delta_{\alpha}'(T)dt\\
&=\frac{3}{2}|\alpha |\frac{K_{2}}{K_{3}}\mathcal{E}_{d}(\alpha )\alpha^{x-1}\int_{J}\exp \left\{ -\frac{9}{4}\tau \frac{K_{2}^{3}}{K_{3}^{2}}F(T)-G(T)\right\} \frac{\left[ 1+\frac{3}{2}\frac{|\alpha |}{\alpha}\frac{K_{2}}{K_{3}}U(T)\right]^{\sigma -1}}{1-\frac{3}{2}\frac{|\alpha |}{1-\alpha}\frac{K_{2}}{K_{3}}U(T)}U'(T)dT\\
&=\frac{3}{2}|\alpha |\frac{K_{2}}{K_{3}}\mathcal{E}_{d}(\alpha )\alpha^{x-1} \tilde{\mathcal{I}}_{d}(x)
\end{split}
\]
Now, from (\ref{km}),
\[
\begin{split}
G(T)&=\sum_{m\geq 4}\frac{C_{m}}{m}\left[ \Delta_{\alpha}(T)\right]^{m}\\
&=\sum_{m\geq 4}-(-i)^{m-1}\frac{\tau}{m|\alpha|^{m}}K_{m}\left[ \frac{3}{2}|\alpha |\frac{K_{2}}{K_{3}}T\left( \sqrt{\frac{T+1}{3T+1}}+i\right) \right]^{m}\\
&=\sum_{m\geq 4}-(-i)^{m-1}\frac{\tau K_{m}}{m}\left( \frac{3K_{2}}{2K_{3}}\right)^{m}T^{m}\left( \sqrt{\frac{T+1}{3T+1}}+i\right)^{m}
\end{split}
\]
Let $U(T)=T\left( \sqrt{\frac{T+1}{3T+1}}+i\right)$.  Then, $|U(T)|=|T|\sqrt{\frac{2T+1}{3T+1}}$.  Then, assuming $|U(T)|$ is small enough, for example, if $|U(T)|\leq \frac{K_{3}}{2K_{2}}$,
\begin{equation} \label{asymptotic G}
G(T)= -\sum_{m\geq 4}(-i)^{m-1}\frac{\tau K_{m}}{m}\left( \frac{3K_{2}}{2K_{3}}\right)^{m}U(T)^{m}=-i\frac{\tau K_{4}}{4}\left( \frac{3K_{2}}{2K_{3}}\right)^{4}U(T)^{4}\left\{ 1+O[K_{2}U(T)]\right\}
\end{equation}
Then, for small enough $T$, 
\begin{equation} \label{asymptotic i}
\exp \{ -G(T)\} =1+O\{ \tau K_{2}^{4}U(T)^{4}\}
\end{equation}
Using that $i\alpha \in (0,1)$, and $\sigma$ varies in a compact subset of $(0,1)$, it follows that, for small enough $T$, it is not hard to see that:
\begin{equation} \label{asymptotic ii}
\frac{\left[ 1+\frac{3}{2}\frac{|\alpha |}{\alpha}\frac{K_{2}}{K_{3}}U(T)\right]^{\sigma -1}}{1-\frac{3}{2}\frac{|\alpha |}{1-\alpha}\frac{K_{2}}{K_{3}}U(T)}=1+O\{ K_{2}U(T)\}
\end{equation}
It follows that for small enough $T$, we can write $\tilde{\mathcal{I}}_{d}$ as:
\begin{equation} \label{approximate integral i}
\begin{split}
\tilde{\mathcal{I}}_{d}(x)&=\int_{J}\exp \left\{ -\frac{9}{4}\tau \frac{K_{2}^{3}}{K_{3}^{2}}F(T)\right\} U'(T)\left\{ 1+O\left[ \tau K_{2}^{4}|U(T)|^{4}+K_{2}|U(T)|\right] \right\} dT\\
\end{split}
\end{equation}
We choose $J$ so (\ref{asymptotic G}), (\ref{asymptotic i}) and (\ref{asymptotic ii}) are satisfied;  
\begin{equation} \label{J}
J=\left[ -\frac{1}{3}\left( 1-\frac{1}{1+\tau^{-4/7}K_{2}^{-2}}\right) ,\frac{1}{\tau^{2/7}K_{2}}\right]
\end{equation}
Then, for $T\in J$, $|U(T)|=O(\tau^{-2/7}K_{2}^{-1})=o(\tau^{-1/4}K_{2}^{-1})$.  Now we can write (\ref{approximate integral i}) as:
\begin{equation} \label{approximate integral ii} 
\tilde{\mathcal{I}_{d}}(x)=\int_{J}\exp \left\{ -\frac{9}{4}\tau \frac{K_{2}^{3}}{K_{3}^{2}}F(T)\right\} U'(T)dT\left\{ 1+O\left[ \tau^{-1/28}\right] \right\} 
\end{equation}
This last estimate is sufficient for the purpose of finding the zeros of $\mathcal{L}_{d}$.  The estimate we obtain below is cleaner looking, and also suffices for the zero search.  Now, we estimate the tails.  $F(T)$ is increasing on $[0,\infty )$, and, by (\ref{F}), $F(T)\gg T^{3}$ for all $T\geq 0$.  Also,
\begin{equation} \label{derivative U}
U'(T)=i+T\sqrt{\frac{T+1}{3T+1}}\left\{ \frac{1}{T}+\frac{1/2}{T+1}-\frac{3/2}{3T+1}\right\} =O(1),
\end{equation}
as $T\rightarrow \infty$.  We then have:
\begin{multline}
\int_{\tau^{-2/7}K_{2}^{-1}}^{\infty}\exp \left\{ -\frac{9}{4}\tau \frac{K_{2}^{3}}{K_{3}^{2}}F(T)\right\} U'(T)dT\ll \\
\ll\int_{\tau^{-2/7}K_{2}^{-1}}^{\infty}\exp \left\{ -\tau K_{2}^{3}CT^{3}\right\} dT\\
\ll \frac{1}{3}\int_{\tau^{-6/7}K_{2}^{-3}}^{\infty}\exp \left\{ -\tau K_{2}^{3}CT\right\} T^{-2/3}dT\\
\ll \frac{1}{\tau^{3/7} K_{2}}\exp \{ -C\tau^{1/7}\} 
\end{multline} 
as $d\rightarrow \infty$, for some positive constant $C$ that does not depend on $d$.  Since we assumed $0<d-\tau =\Omega (\sqrt[6]{d})$,we get:
\begin{equation} \label{upper tail}
\int_{\tau^{-2/7}K_{2}^{-1}}^{\infty}\exp \left\{ -\frac{9}{4}\tau \frac{K_{2}^{3}}{K_{3}^{2}}F(T)\right\} U'(T)dT\ll \exp \{ -C\tau^{1/7}\}
\end{equation}
From (\ref{derivative U}) and (\ref{F}), we get:
\begin{equation}\label{lower bound F negative T}
U'(T)\ll (3T+1)^{-3/2}\ll F(T),
\end{equation}
for all $T\in (-\frac{1}{3},0]$.  It follows, then,
\begin{multline} \label{lower tail}
\int_{-1/3}^{-\frac{1}{3}\left( 1-\frac{1}{1+\tau^{-4/7}K_{2}^{-2}}\right) }\exp \left\{ -\frac{9}{4}\tau \frac{K_{2}^{3}}{K_{3}^{2}}F(T)\right\} U'(T)dT\ll \\
\ll\int_{-1/3}^{-\frac{1}{3}\left( 1-\frac{1}{1+\tau^{-4/7}K_{2}^{-2}}\right) }\exp \left\{ -\tau K_{2}^{3}D(3T+1)^{-3/2}\right\} (3T+1)^{-3/2}dT\\
=\frac{9}{2} \int_{[3(1+\tau^{-4/7}K_{2}^{-2})]^{3/2}}^{\infty}\exp \left\{ -\tau K_{2}^{3}DT\right\} T^{-2/3}dT\\
\ll \exp \{ -D\tau^{1/7}\} 
\end{multline} 
as $d\rightarrow \infty$, for some positive constant $D$ that does not depend on $d$.  Putting together (\ref{approximate integral ii}), (\ref{upper tail}), and (\ref{lower tail}), we finally get:
\begin{equation}
\tilde{\mathcal{I}}_{d}(x)=\int_{-1/3}^{\infty}\exp \left\{ -\frac{9}{4}\tau \frac{K_{2}^{3}}{K_{3}^{2}}F(T)\right\} U'(T)dT\{ 1+O[ \tau^{-1/28}]\}, 
\end{equation}
as $d\rightarrow \infty$.
The above equation is the main term of the asymptotic formula we are looking for.  Remember that we started by finding a saddle point of $\phi_{d}$, $\alpha \in\mathbb{D}$.  Then, we integrated $\mathcal{E}_{d}(t)t^{x-1}$ over small curve segment that is tangent to the path of steepest descent through $\alpha$.  This last was $\Delta_{\alpha}(T)=\alpha +\frac{3}{2}|\alpha |\frac{K_{2}}{K_{3}}T\left( \sqrt{\frac{T+1}{3T+1}}+i\right)$.  For our choice of domain $J$ it isn't hard to see that, for $T\in J\cap \mathbb{R}_{>0}$, $\Delta_{\alpha}(T)=\alpha +O(\tau^{-2/7})$ will be contained in the fourth quadrant of the complex plane, while for $T\in J\cap \mathbb{R}_{>0}$, $\Delta_{\alpha}(T)$ will be contained in the third quadrant of the complex plane.  This last will give the main term that comes from the vicinity of the saddle point $\alpha$.  In reality we want to integrate over $-\Delta_{\alpha}$.    So, the contribution to our asymptotic formula coming from the vicinity of $\alpha$ is, as $d\rightarrow \infty$: 
\begin{equation}
\int_{-\Delta_{\alpha}}\mathcal{E}_{d}(t)t^{x_{d}-1}dt=-i\frac{3}{2}\frac{K_{2}}{K_{3}}\mathcal{E}_{d}(\alpha )\alpha^{x}\int_{-1/3}^{\infty}\exp \left\{ -\frac{9}{4}\tau \frac{K_{2}^{3}}{K_{3}^{2}}F(T)\right\} U'(T)dT\left\{ 1+O\left[ \tau^{-1/28}\right] \right\}
\end{equation}
Inside the unit disk, the rest of our integration will be done over line segments.  We will choose these line segments in such a way that the value of the integral over them will be small.  We want to make sure that over the line segments the modulus of $\exp \mathcal{Q}(t)$ decreases, as $t$ moves away from $\alpha$, and is bounded above by the modulus of $\exp \mathcal{Q}(\alpha )$.  One way of doing this is by noticing that if $t\in \mathbb{D}$ has $\Re t<0$, then, the modulus of $\mathcal{E}_{d}(Tt)$ decreases, while that of $(Tt)^{i\tau}$ remains constant, as $T$ goes from $1$ to $1/|t|$.  In other words the modulus of $\mathcal{E}_{d}(Tt)$ decreases as as $T$ goes from $1$ to $1/|t|$.  A similar analysis shows that if $\Re t>0$, then, the modulus of $\mathcal{E}_{d}(Tt)$ increases as as $T$ goes from $0$ to $1$.  We can finally write down all of $\Delta_{d} \cap \mathbb{D}$.  Write $J$ (see (\ref{J}) )as $[a_{-},a_{+}]$.  Let $l_{-}$ be the line parametrized as $l_{-}(T)=T\Delta_{\alpha} (a_{-})$, where $T\in [1,|\Delta_{\alpha}(a_{-})|^{-1}]$.  Let $l_{+}$ be the line parametrized as $l_{+}(T)=T\Delta_{\alpha} (a_{+})$, where $T\in [-|\Delta_{\alpha}(a_{+})|^{-1},|\Delta_{\alpha}(a_{+})|^{-1}]$.  Then, our integration contour inside $\mathbb{D}$ will be:
\[
\Delta_{d}=l_{+}-\Delta_{\alpha}+l_{-}
\] 
One should note that $l_{+}$ actually touches the origin, which in principle is not allowed, since $\Gamma$ should be as in (\ref{integral}).  To see that this last does not pose a problem, just notice that integral of $\mathcal{E}_{d}(t)t^{x-1}$ on a straight line through the origin goes to 0 as the length of the line goes to 0.  To bound the values of the integrals over $l_{+}$ and $l_{-}$ we first note that:
\[
\int_{l_{\pm}}\mathcal{E}_{d}(t)t^{x_{d}-1}dt\leq (\max_{l_{\pm}}|\exp \mathcal{Q}|)(\min_{l_{\pm}}|1-t|)\int_{l_{\pm}}|t|^{\sigma_{d}-1} |dt|\ll \max_{l_{\pm}}|\exp \mathcal{Q}|
\]
Here we used the fact that the distance from $l_{\pm}$ to $1$ is bounded away from zero and that the integrals $\int_{l_{\pm}}|t|^{\sigma -1} |dt|$ are uniformly bounded above.  Now, by (\ref{asymptotic i}) and (\ref{lower bound F negative T}),
\[
\begin{split}
\exp [\mathcal{Q}(a_{\pm})]&=\exp [\mathcal{Q}(\alpha )]\exp\{ \phi [\Delta_{\alpha}(a_{\pm})]\} \\
&=\exp [\mathcal{Q}(\alpha )]\exp \left\{ -\frac{9}{4}\tau \frac{K_{2}^{3}}{K_{3}^{2}}F(a_{\pm})-G(a_{\pm})\right\} \\
&=\exp [\mathcal{Q}(\alpha )]\exp \left\{ -\frac{9}{4}\tau \frac{K_{2}^{3}}{K_{3}^{2}}F(a_{\pm})\right\} \{ 1+O(\tau K_{2}^{4}U(a_{\pm})^{4})\} \\
&=\exp [\mathcal{Q}(\alpha )]\exp \{ -D\tau^{1/7} \} \{ 1+O(\tau^{-1/28})\}, 
\end{split}
\]
where $D$ is some positive constant.  Then, 
\[
\int_{l_{\pm}}\mathcal{E}_{d}(t)t^{x-1}dt\ll |\Delta_{\alpha} (a_{\pm})|\exp [\mathcal{Q}(\alpha )]\exp \{ -D\tau^{1/7} \} \ll \mathcal{E}_{d}(\alpha )\alpha^{i\tau +1}\exp \{ -D\tau^{1/7} \} 
\]
We then have:
\begin{equation}
\begin{split}
\int_{\Delta_{d}}\mathcal{E}_{d}(t)t^{x-1}dt&=\left( \int_{-\Delta_{\alpha}}+\int_{l_{-}}+\int_{l_{+}}\right) \mathcal{E}_{d}(t)t^{x-1}dt\\
&=-i\frac{3}{2}\frac{K_{2}}{K_{3}}\mathcal{E}_{d}(\alpha )\alpha^{x}\mathcal{I}(\alpha )\{ 1+O[K_{2}^{-1}\exp (-D\tau^{1/7})]\}\\
&=-i\frac{3}{2}\frac{K_{2}}{K_{3}}\mathcal{E}_{d}(\alpha )\alpha^{x}\mathcal{I}(\alpha )\{ 1+O[\exp (-D\tau^{1/7})]\}
\end{split}
\end{equation}
In the last line above we used that $\sigma \in [\epsilon ,1-\epsilon ]$, and that $K_{2}=\sqrt{1-\tau^{2}/d^{2}}\gg d^{-5/12}$ since $0<\tau \leq d-\sqrt[6]{d}$.  Finally, we define the section of integration contour that lies outside $\mathbb{D}$.  For any given curve in $\gamma :[0,1]\rightarrow \mathbb{C}$ define $\gamma^{-1}(T)=[\gamma (T)]^{-1}$, $\overline{\gamma}(T)=\overline{\gamma (T)}$ and $-\gamma(T)=\gamma (1-T)$.  Finally, define $\Gamma_{d}\cap ( \mathbb{C}\setminus \mathbb{D}):=-\left( \overline{\Gamma_{d}\cap \mathbb{D}}\right)^{-1}$.  We then have:
\[
\begin{split}
\int_{-( \overline{\Gamma_{d}\cap \mathbb{D}})^{-1}}\mathcal{E}_{d}(t)t^{x_{d}-1}dt&=\int \mathcal{E}_{d}[-( \overline{\Gamma_{d}\cap \mathbb{D}})^{-1} ][-( \overline{\Gamma_{d}\cap \mathbb{D}})^{-1}]^{x_{d}-1}(-1)[-(\overline{\Gamma_{d}\cap \mathbb{D}})^{-2}][-(\overline{\Gamma_{d}\cap \mathbb{D}})]'\\
&=(-1)^{d}\int \overline{\mathcal{E}_{d}[-(\Gamma_{d}\cap \mathbb{D})][-(\Gamma_{d}\cap \mathbb{D})]^{-\overline{x}}[-( \Gamma_{d}\cap \mathbb{D})]'}\\
&=(-1)^{d+1}\overline{ \int \mathcal{E}_{d}[\Gamma_{d}\cap \mathbb{D}](\Gamma_{d}\cap \mathbb{D})^{-\overline{x}}( \Gamma_{d}\cap \mathbb{D})'}\\
&=(-1)^{d+1}\overline{\int_{\Gamma_{d}\cap \mathbb{D}}\mathcal{E}_{d}(t)t^{(1-\overline{x})-1}dt}
\end{split}
\] 
Note that $\Im{1-\overline{x}}=\tau$, so $\alpha$ and $\phi$ do not change.  We can then use all the above estimates to obtain, as $d\rightarrow +\infty$:
\begin{equation} \label{middle formula}
\int_{\Gamma_{d}}\mathcal{E}_{d}(t)t^{x-1}dt=\left[ \int_{\Gamma_{d}\cap \mathbb{D}}+\int_{\Gamma_{d}\cap ( \mathbb{C}\setminus \mathbb{D})}\right]\mathcal{E}_{d}(t)t^{x-1}dt=\mathcal{F}(d,x)+(-1)^{d+1}\overline{\mathcal{F}(d,1-\overline{x})},
\end{equation}
where,
\begin{equation}
\mathcal{F}(d,x)=-i\frac{3}{2}\frac{K_{2}}{K_{3}}\mathcal{E}_{d}(\alpha )\alpha^{x}\mathcal{I}_{d}(x)\{ 1+O[K_{2}^{-1}\exp (-D\tau^{1/7})]\} ,
\end{equation}
where,
\[
\mathcal{I}_{d}(x)=\int_{-1/3}^{\infty}\exp \left\{ -\frac{9}{4}\tau \frac{K_{2}^{3}}{K_{3}^{2}}F(T)\right\} U'(T)dT
\]
Note that $-i3K_{2}/2K_{3}=3C_{2}/2C_{3}$.  

\section{Some commentary}

The asymptotics of $\mathcal{L}_{d}(x)$ can be written down for all $x$ with $\Re{x}=-1/2$.  Remember that we worked only enough of the asymptotics of $\mathcal{L}_{d}$ to find its roots.  The methods used here seem to be perfectly applicable to the study of other polynomials and functions that have a nice enough generating series.  For example, if we take some sequence of polynomials $\{ \mathcal{P}_{d}\}_{d\geq 0}$, where $d$ is the degree of $\mathcal{P}_{d}$, such that the generating function $\sum_{m\geq 0}\mathcal{P}_{d}(m)t^{m}$ has all of its zeros on the unit circle, then, the methods used here can be applied under the appropriate, not very strict conditions.  These last topics will be subject of follow-up work.  Much of the work presented is inspired on the methods used in the theory of orthogonal polynomials and the theory of analytic combinatorics (for a very thorough introduction to the field of analytic combinatorics see Flajolet's and Sedgewick's book  \cite{citeulike:6390346}).  The polynomials $\mathcal{L}_{d}$ form a family of orthogonal polynomials which happen to be Mellin transforms of Laguerre polynomials (see \cite{bump-local}).  In general one can construct, through the use of recursions, families of orthogonal polynomials, which have generating series with nice properties, such as having generating functions with zeros on the unit circle.  The methods used in this paper can be applied to this last as well.  On the combinatorial geometrical side of things there might be interesting applications to knowledge about the roots of Ehrhart polynomials; see Henk's et al.\ work in \cite{2005math......7528H}.

\bibliographystyle{plain}  
\bibliography{polyref}        
\end{document}